\documentclass[10pt, reqno]{amsart}

\usepackage{amsfonts}
\usepackage{amsthm}
\usepackage{amsmath}
\usepackage{oldgerm}

\newtheorem{thm}{Theorem}[section]
\newtheorem{lemma}[thm]{Lemma}
\newtheorem{cor}[thm]{Corollary}
\newtheorem{fact}[thm]{Theorem}

\theoremstyle{definition}
\newtheorem{definition}[thm]{Definition}
\newtheorem{ex}[thm]{Example}

\theoremstyle{remark}
\newtheorem{remark}[thm]{Remark}

\numberwithin{equation}{section}

\DeclareMathOperator{\rank}{rank}
\DeclareMathOperator{\Range}{Range}
\DeclareMathOperator{\ess}{ess}

\DeclareMathOperator{\dist}{dist}
\DeclareMathOperator{\ind}{ind}
\DeclareMathOperator{\mySpan}{span}
\DeclareMathOperator{\codim}{codim}

\newcommand{\Hk}{H_{(k)}^{\infty}}
\newcommand{\Ek}{E^{(k)}}
\newcommand{\fS}{\textfrak{S}}

\newcommand{\cB}{\mathcal{B}}
\newcommand{\cE}{\mathcal{E}}

\newcommand{\cH}{\mathcal{H}}
\newcommand{\cK}{\mathcal{K}}
\newcommand{\cL}{\mathcal{L}}

\newcommand{\cM}{\mathcal{M}}

\newcommand{\C}{\mathbb{C}}
\newcommand{\M}{\mathbb{M}}

\newcommand{\T}{\mathbb{T}}
\newcommand{\D}{\mathbb{D}}
\newcommand{\pP}{\mathbb{P}}

\newcommand{\m}{\boldsymbol{m}}
\newcommand{\dm}{d\m}

\newcommand{\Myspan}{{\rm span}}
\newcommand{\Zero}{\mathbb{O}}
\newcommand{\Mydef}{\stackrel{\mbox{\footnotesize{\rm def}}}{=}}

\begin{document}
\title[An index formula in connection with meromorphic approximation]{An index formula in connection with 
meromorphic approximation}

\author{Alberto A. Condori}
\address{Department of Chemistry and Mathematics\\ 
Florida Gulf Coast University\\
10501 FGCU Boulevard South\\
Fort Myers, FL 33965}
\email{acondori@fgcu.edu}

\subjclass{Primary 47A57; Secondary 47B35, 46E40.}
\keywords{Nehari-Takagi problem, Hankel and Toeplitz operators, best approximation, badly approximable 
matrix-valued functions, superoptimal approximation.}

%47A57:	Operator methods in interpolation, moment and extension problems
%47B35:	Toeplitz operators, Hankel operators, Wiener-Hopf operators
%46E40:	Spaces of vector- and operator-valued functions

\begin{abstract}
			Let $\Phi$ be a continuous $n\times n$ matrix-valued function on the unit circle $\T$ such that 
			the $(k-1)$th singular value of the Hankel operator with symbol $\Phi$ is greater than the $k$th 
			singular value.  In this case, it is well-known that $\Phi$ has a unique superoptimal meromorphic 
			approximant $Q$ in $H^{\infty}_{(k)}$; that is, $Q$ has at most $k$ poles in the unit disc $\mathbb{D}$ 
			(in the sense that the McMillan degree of $Q$ in $\mathbb{D}$ is at most $k$) and $Q$ minimizes the 
			essential suprema of singular values $s_{j}\left((\Phi-Q)(\zeta)\right)$, $j\geq0$,	with respect to 
			the lexicographic ordering.  For each $j\geq 0$, the essential supremum of 
			$s_{j}\left((\Phi-Q)(\zeta)\right)$ is called the $j$th superoptimal singular value of degree $k$ 
			of $\Phi$.  We prove that if $\Phi$ has $n$ non-zero superoptimal singular values of degree $k$, 
			then the Toeplitz operator $T_{\Phi-Q}$ with symbol $\Phi-Q$ is Fredholm and has index
			\[ \ind T_{\Phi-Q}=\dim\ker T_{\Phi-Q}=2k+\dim\mathcal{E}, \]
			where $\mathcal{E}=\{ \xi\in\ker H_{Q}: \|H_{\Phi}\xi\|_{2}=\|(\Phi-Q)\xi\|_{2}\}$ and $H_{\Phi}$ 
			denotes the Hankel operator with symbol $\Phi$.  This result can in fact be extended from continuous 
			matrix-valued functions to the wider class of $k$-\emph{admissible} matrix-valued functions, i.e. 
			essentially bounded $n\times n$ matrix-valued functions $\Phi$ on $\T$ for which the essential norm 
			of the Hankel operator $H_{\Phi}$ is strictly less than the smallest non-zero superoptimal singular 
			value of degree $k$ of $\Phi$.
\end{abstract}
 
\maketitle

\section{Introduction}

Let $\varphi$ be a bounded measurable function defined on the unit circle $\T$.  For $k\geq0$, 
let $\Hk$ denote the collection of meromorphic functions in the unit disc $\D$ which are bounded near $\T$ 
and have at most $k$ poles in $\D$ (counting multiplicities).  The \emph{Nehari-Takagi problem} is to find 
a $q\in\Hk$ which is closest to $\varphi$ with respect to the $L^{\infty}$-norm, i.e. to find $q\in \Hk$ 
such that
\[	\|\varphi-q\|_{\infty}=\dist_{L^{\infty}}(\varphi, \Hk)=\inf_{f\in\Hk}\|\varphi-f\|_{\infty}.	\]
Any such function $q$ is called a \emph{best approximant in $\Hk$ to $\varphi$}.  

Although a best approximant in $\Hk$ need not be unique in general, if $\varphi$ is a continuous function 
on $\T$,  then uniqueness holds.  Moreover, under this assumption, it can be shown that the function 
defined by $\varphi-q$ has constant modulus (equal to $s_{k}(H_{\varphi})$) a.e. on $\T$, the Toeplitz 
operator $T_{\varphi-q}$ is Fredholm, and 
\begin{equation}\label{scalarIndex}
			\ind T_{\varphi-q}=2k+\mu,
\end{equation}
where $\mu$ denotes the multiplicity of the singular value $s_{k}(H_{\varphi})$ of the Hankel operator 
$H_{\varphi}$ with symbol $\varphi$ (e.g. see Chapter 4 in \cite{Pe1}).  In fact, the best meromorphic 
approximant to $\varphi$ in $\Hk$ is the unique function in $\Hk$ that has these three properties.

The index formula in $(\ref{scalarIndex})$ not only provides a uniqueness criterion for the best 
meromorphic approximant in $\Hk$, it also appears in applications such as the study of singular 
values of Hankel operators with perturbed symbols (see \cite{Pe2} or Chapter 7 in \cite{Pe1}).
This index formula can also be used to obtain a sharp estimate on the degree of the best meromorphic 
approximant to a given (scalar) rational function \cite{Pe1}.  

Note that in the case of $2\times 2$ matrix-valued rational functions, sharp estimates have also been 
obtained but only for their \emph{analytic} approximants \cite{PV}.  These estimates were made without
use of the index formula argument used for scalar functions.

Thus, the main focus of this paper is to obtain an analogous index formula for matrix-valued functions 
on $\T$.  Unlike the scalar-case, if $\Phi$ is a continuous matrix-valued function on $\T$, then $\Phi$ 
may not have a unique best meromorphic approximant $Q$ with at most $k$ poles in $\D$ (i.e. the McMillan 
degree of $Q$ in $\D$ is at most $k$).  However, the uniqueness of a \emph{superoptimal} meromorphic 
approximant $Q$ having at most $k$ poles in $\D$ does hold under the additional assumption that
$s_{k}(H_{\Phi})<s_{k-1}(H_{\Phi})$.  Therefore, the natural question arises whether the index formula 
in $(\ref{scalarIndex})$ holds for such matrix-valued functions $\Phi$ with superoptimal meromorphic 
approximant $Q$; that is, does
\begin{equation}\label{indexMatrix}
			\ind T_{\Phi-Q}=2k+\mu
\end{equation}
hold?

Recall that for continuous $n\times n$ matrix-valued functions $\Psi$, the Toeplitz operator $T_{\Psi}$ is 
Fredholm if and only if $\det\Psi$ does not vanish a.e. on $\T$.  It follows from known results regarding
the error term $\Phi-Q$ that a necessary and sufficient condition for the Toeplitz operator $T_{\Phi-Q}$ 
to be Fredholm (when $\Phi$ is continuous) is that \emph{all superoptimal singular values of degree $k$ 
of $\Phi$ are non-zero}.  Unfortunately, as shown in Example \ref{theEx} below, the index formula in 
$(\ref{indexMatrix})$ fails to hold under this additional assumption.

\begin{ex}\label{theEx}
			Consider the matrix-valued function
			\begin{equation*}
						\Phi=\frac{1}{\sqrt{2}}\left(	\begin{array}{cc}
																								\bar{z}^{5}+\frac{1}{3}\bar{z}	&	-\frac{1}{3}\bar{z}^{2}\\
																								\bar{z}^{4}											&	\frac{1}{3}\bar{z}
																					\end{array}\right).
			\end{equation*}
			It is not difficult to verify that the non-zero singular values of $H_{\Phi}$ are
			\begin{align*}
						s_{0}(H_{\Phi})=\frac{\sqrt{10}}{3},\,  s_{1}(H_{\Phi})=s_{2}(H_{\Phi})=s_{3}(H_{\Phi})=1,\,
						 s_{4}(H_{\Phi})=\frac{1}{\sqrt{2}},\, \text{ and }\, s_{5}(H_{\Phi})=\frac{1}{3}.
			\end{align*}
			In particular, if $\mu$ denotes the multiplicity of the singular value $s_{1}(H_{\Phi})=1$ of the 
			Hankel operator $H_{\Phi}$, then $2k+\mu=5$.
			
			Using an algorithm due to Peller and Young (\cite{PY2} or section 17 of Chapter 14 in \cite{Pe1}), 
			it can be shown that the superoptimal approximant in $H^{\infty}_{(1)}(\M_{2})$ to $\Phi$ is 
			\begin{equation*}
						Q=\frac{1}{\sqrt{2}}\left(	\begin{array}{cc}
																							\frac{1}{3}\bar{z}	&	\Zero\\
																							\Zero								&	\Zero
																				\end{array}\right).
			\end{equation*}
			However, $\ind T_{\Phi-Q}=\dim\ker T_{\Phi-Q}=6$ (by Theorem 7.4 of Chapter 14 in \cite{Pe1} or 
			Theorem 2.2 in \cite{PY3}), because $\Phi-Q$ admits a (``thematic'') factorization of the form
			\begin{align*}
						\Phi-Q=	\frac{1}{\sqrt{2}}
										\left(	\begin{array}{cc}
																	\bar{z}	&	-1\\
																	1				&	z
														\end{array}\right)
										\left(	\begin{array}{cc}
																	\bar{z}^{4}	&	\Zero\\
																	\Zero				&	\frac{1}{3}\bar{z}^2
														\end{array}\right).
			\end{align*}
			Hence the index formula in $(\ref{indexMatrix})$ fails to hold for this choice of $\Phi$.
\end{ex}

In this paper, we establish the correct analog to the index formula in $(\ref{scalarIndex})$, namely
\[	\ind T_{\Phi-Q}=\dim\ker T_{\Phi-Q}=2k+\dim\cE	\]
for continuous $n\times n$ matrix-valued functions $\Phi$ such that $s_{k}(H_{\Phi})<s_{k-1}(H_{\Phi})$ 
and whose superoptimal singular values of degree $k$ are all non-zero, where
\[	\cE=\{ \xi\in\ker H_{Q}: \|H_{\Phi}\xi\|_{2}=\|(\Phi-Q)\xi\|_{2}\}.	\]
In fact, we prove that our analog holds in the more general case of ``$k$-admissible'' bounded 
matrix-valued functions.  This is accomplished using a result involving ($k$-admissible) matrix-valued 
weights for Hankel operators, the proof of which was inspired by Treil's approach to superoptimal 
approximation (actually the main ideas go back to \cite{T2}).  We also show that if all superoptimal 
singular values of degree $k$ of $\Phi$ are equal, then our index formula agrees with the formula in 
$(\ref{indexMatrix})$ (see Corollary \ref{indexCor}).  This result is obtained using a characterization 
of the space of Schmidt vectors $\Ek(\Phi)$ that correspond to the singular value $s_{k}(H_{\Phi})$ 
of the Hankel operator $H_{\Phi}$ with symbol $\Phi$.  Note that this characterization involves 
\emph{any} best approximant in $\Hk(\M_{n})$ to $\Phi$.  

The organization of the paper is as follows.  All necessary background on superoptimal approximation 
appears in section \ref{BackgroundSection}.  The characterization of the space of Schmidt vectors is 
given in section \ref{schmidtSection}.  We prove in section \ref{FredholmSection} that the Toeplitz 
operator induced by the error term $\Phi-Q$ is Fredholm and establish in section \ref{weightSection}
a result concerning matrix-valued weights for Hankel operators.  Section \ref{indexSection} contains 
proof that our analog to the index formula holds for $k$-admissible matrix-valued functions.

\subsection{Notation and terminology.}  Throughout the paper, we use the following notation and 
terminology:

$\m$ denotes normalized Lebesgue measure on the unit circle $\T$ so that $\m(\T)=1$;

$\Zero$ denotes the matrix-valued function which equals the zero matrix on $\T$ (its size will be clear 
in the context);

$\M_{m,n}$ denotes the space of $m\times n$ matrices equipped with the operator norm $\|\cdot\|_{\M_{m,n}}$ 
and $\M_{n}\Mydef\M_{n,n}$;

$A^{t}$ denotes the transpose of a matrix $A\in\M_{m,n}$;

$X(\M_{m,n})$ denotes the space of $m\times n$ matrix-valued functions on $\T$ whose entries belong to 
a space $X$ of scalar functions on $\T$ and $X(\C^{n})\Mydef X(\M_{n,1})$;

$\displaystyle{\|\Psi\|_{L^{\infty}(\M_{m,n})}\Mydef\ess\sup_{\zeta\in\T}\|\Psi(\zeta)\|_{\M_{m,n}}}$ 
for $\Psi\in L^{\infty}(\M_{m,n})$;

$\Phi^{t}$ denotes the function $\Phi^{t}(\zeta)\Mydef(\Phi(\zeta))^{t}$, $\zeta\in\T$, when $\Phi\in 
L^{\infty}(\M_{m,n})$;

$\cB(X,Y)$ denotes the collection of bounded linear operators $T:X\rightarrow Y$ between normed spaces 
$X$ and $Y$;

if $T\in\cB(X,Y)$, we say that a non-zero vector $x\in X$ is a \emph{maximizing vector of $T$} whenever 
$\|Tx\|_{Y}=\|T\|\cdot\|x\|_{X}$;

$\cH$ and $\cK$ denote Hilbert spaces;

if $T\in\cB(\cH,\cK)$, the \emph{singular values $s_{n}(T)$, $n\geq0$, of $T$} are defined by
\[	s_{n}(T)=\inf\{\|T-R\|: R\in\cB(\cH,\cK), \rank R\leq n \}	\]
and the \emph{essential norm of $T$} is defined by
\[	\|T\|_{\rm e}=\inf\{\|T-K\|: K\in\cB(\cH,\cK), K\text{ is a compact operator }\};	\]

and if $T\in\cB(\cH,\cK)$ and $s$ is a singular value of $T$, a non-zero vector $x\in\cH$ is called a 
\emph{Schmidt vector} corresponding to $s$ whenever $T^{*}T x=s^{2}x$.

\section{Background}\label{BackgroundSection}

\subsection{Best and superoptimal approximation in $\Hk(\M_{m,n})$}\label{bestRemarks}

To introduce the class $\Hk(\M_{m,n})$, we must first define the notion of a finite Blaschke-Potapov 
product.

A matrix-valued function $B\in H^{\infty}(\M_{n})$ is called a finite \emph{Blaschke-Potapov product} 
if it admits a factorization of the form
\begin{equation*}
			B=U B_{1}B_{2}\ldots B_{m},
\end{equation*}
where $U$ is a unitary matrix and, for each $1\leq j\leq m$,
\[	B_{j}=\frac{z-\lambda_{j}}{1-\bar{\lambda}_{j}z}P_{j}+(I-P_{j})	\]
for some $\lambda_{j}\in\D$ and orthogonal projection $P_{j}$ on $\C^{n}$.  The \emph{degree} of the 
Blaschke-Potapov product $B$ is defined to be
\[	\deg B\Mydef \sum_{j=1}^{m}\rank P_{j}.	\]
Alternatively, $B$ is a finite Blaschke-Potapov product of degree $k$ if and only if $B$ admits a 
factorization of the form
\begin{equation}
			B(z)=U_{0}\left(
			\begin{array}{cc}
						\frac{z-a_{1}}{1-\bar{a}_{1}z}	&	\Zero\\
						\Zero														&	I_{n-1}	
			\end{array}\right)U_{1}\ldots U_{k-1}\left(
			\begin{array}{cc}
						\frac{z-a_{k}}{1-\bar{a}_{k}z}	&	\Zero\\
						\Zero														&	I_{n-1}	
			\end{array}\right)U_{k},
\end{equation}
where $a_{1},\ldots,a_{k}\in\D$; $U_{0},U_{1},\ldots,U_{k}$ are constant $n\times n$ unitary matrices; 
and $I_{n-1}$ denotes the $(n-1)\times(n-1)$ identity matrix.

It turns out that every invariant subspace $\cL$ of multiplication by $z$ on $H^{2}(\C^{n})$ of finite 
codimension is of the form $B H^{2}(\C^{n})$ for some Blaschke-Potapov product $B$ of finite degree.  
Moreover, the degree of $B$ equals $\codim \cL$ (e.g. see Lemma 5.1 in Chapter 2 of \cite{Pe1}).

A matrix-valued function $Q\in L^{\infty}(\M_{m,n})$ is said to have \emph{at most $k$ poles in $\D$} 
if there is a finite Blaschke-Potapov product $B$ of degree $k$ such that $QB\in H^{\infty}(\M_{m,n})$.  
We denote the collection of $m\times n$ matrix-valued functions $Q$ that have at most $k$ poles in $\D$ 
by $\Hk(\M_{m,n})$.  

For $Q\in L^{\infty}(\M_{m,n})$ with at most $k$ poles in $\D$, \emph{the McMillan degree of $Q$ in $\D$} 
is the smallest number $j\geq 0$ such that $Q$ has at most $j$ poles in $\D$.  In particular, $\Hk(\M_{m,n})$ 
consists of matrix-valued functions $Q\in L^{\infty}(\M_{m,n})$ which can be written in the form $Q=R+F$ for
some $F\in H^{\infty}(\M_{m,n})$ and some rational $m\times n$ matrix-valued function $R$ with poles in $\D$ 
such that the McMillan degree of $R$ in $\D$ is at most $k$.  In this paper, we do not need (explicitly) the 
general definition of McMillan degree (which omits the restriction to the disc $\D$) and thus refer the 
interested reader to Chapter 2 in \cite{Pe1} for further information regarding McMillan degree.

\begin{definition}
			Let $k\geq 0$.  Given an $m\times n$ matrix-valued function $\Phi\in L^{\infty}(\M_{m,n})$, we say 
			that $Q$ is a \emph{best approximant in $\Hk(\M_{m,n})$ to $\Phi$} if $Q\in\Hk(\M_{m,n})$ and
			\[	\|\Phi-Q\|_{L^{\infty}(\M_{m,n})}=\dist_{L^{\infty}(\M_{m,n})}(\Phi, \Hk(\M_{m,n})).	\]
\end{definition}
Note that by a compactness argument, a matrix-valued function $\Phi\in L^{\infty}(\M_{m,n})$ always has 
a best approximant in $\Hk(\M_{m,n})$.  That is, the set
\[	\Omega_{0}^{(k)}(\Phi)\Mydef\left\{	Q\in\Hk(\M_{m,n}): 
		Q \mbox{ minimizes }\ess\sup_{\zeta\in\T}\|\Phi(\zeta)-Q(\zeta)\|_{\M_{m,n}}	\right\}	\]
is always non-empty (e.g. see section 3 in Chapter 4 of \cite{Pe1}).

As in the case of scalar-valued bounded functions, Hankel operators on Hardy spaces are very useful tools 
in the study of best approximation by matrix-valued functions in $\Hk(\M_{m,n})$.  For a matrix-valued 
function $\Phi\in L^{\infty}(\M_{m,n})$, we define the \emph{Hankel operator $H_{\Phi}$} by 
\[	H_{\Phi}f=\pP_{-}\Phi f,\;\mbox{ for }f\in H^{2}(\C^{n}),	\]
where $\pP_{-}$ denotes the orthogonal projection of $L^{2}(\C^{m})$ onto $H^{2}_{-}(\C^{m})=L^{2}(\C^{m})
\ominus H^{2}(\C^{m})$.  It is well-known (\cite{T1} or section 3 of Chapter 4 in \cite{Pe1}) that
\begin{equation}\label{skTreil}
			\dist_{L^{\infty}(\M_{m,n})}(\Phi, \Hk(\M_{m,n}))=s_{k}(H_{\Phi}).
\end{equation}
However in contrast to the case of scalar-valued functions, it is known that the condition $\|H_{\Phi}\|_{\rm e}
<s_{k}(H_{\Phi})$ does not guarantee uniqueness of a best approximant in $\Hk(\M_{m,n})$ to $\Phi$.

Since the set of best approximants $\Omega_{0}^{(k)}(\Phi)$ to $\Phi$ may contain distinct elements, 
it is natural to refine the notion of optimality if possible to obtain the ``very best'' matrix-valued 
function in $\Omega_{0}^{(k)}(\Phi)$.

\begin{definition}\label{superoptDef}
			Let $k\geq 0$ and $\Phi\in L^{\infty}(\M_{m,n})$.  For $j>0$, define the sets
			\[	\Omega_{j}^{(k)}(\Phi)\Mydef\left\{	Q\in\Omega_{j-1}^{(k)}(\Phi): 
					Q \mbox{ minimizes }\ess\sup_{\zeta\in\T}s_{j}(\Phi(\zeta)-Q(\zeta)) \right\}.	\]
			We say that $Q$ is a \emph{superoptimal approximant in $\Hk(\M_{m,n})$ to $\Phi$} if $Q$ belongs to
			$\displaystyle{\bigcap_{j\geq0}\Omega_{j}^{(k)}(\Phi)=\Omega^{(k)}_{\min\{m,n\}-1}(\Phi)}$ and in 
			this case we define the \emph{superoptimal singular values of degree $k$ of $\Phi$} by 
			\[	t_{j}^{(k)}(\Phi)=\ess\sup_{\zeta\in\T}s_{j}((\Phi-Q)(\zeta))\mbox{ for }j\geq 0.	\]
			In the case $k=0$, we also use the notations $\Omega_{j}(\Phi)$ and $t_{j}(\Phi)$ to denote 
			$\Omega_{j}^{(0)}(\Phi)$ and $t_{j}^{(0)}(\Phi)$, respectively, for $j\geq 0$.
\end{definition}

In \cite{T2}, Treil proved that a \emph{unique} superoptimal approximant $Q$ in $\Hk(\M_{m,n})$ to $\Phi$ 
exists whenever $\Phi\in (H^{\infty}+C)(\M_{m,n})$ and $s_{k}(H_{\Phi})<s_{k-1}(H_{\Phi})$.  (Recall that 
$H^{\infty}+C$ denotes the closed subalgebra of $L^{\infty}$ that consists of functions of the form $f+g$ 
with $f\in H^{\infty}$ and $g\in C(\T)$.)  Shorty after, Peller and Young also proved this result in \cite{PY2} 
using a diagonalization argument which also constructs (in principle) the superoptimal approximant. 

A matrix-valued function $\Phi\in L^{\infty}(\M_{m,n})$ is called \emph{$k$-admissible} if $s_{k}(H_{\Phi})
<s_{k-1}(H_{\Phi})$ and $\|H_{\Phi}\|_{\rm e}$ is strictly less than the smallest non-zero number in the 
set $\{t_{j}^{(k)}(\Phi)\}_{j\geq0}$.  (Note that the statement regarding the singular values of the Hankel 
operator is vacuous when $k=0$.)  For notational simplicity, we refer to $0$-admissible matrix-valued functions 
as \emph{admissible}.  In particular, any matrix-valued function $\Phi$ that belongs to $(H^{\infty}+C)(\M_{m,n})$ 
is admissible because the Hankel operator $H_{\Phi}$ has essential norm equal to zero.

It is now known (see section 17 of Chapter 14 in \cite{Pe1}) that if $\Phi\in L^{\infty}(\M_{m,n})$ is 
$k$-admissible, then $\Phi$ has a unique superoptimal approximant $Q$ in $\Hk(\M_{m,n})$ and
\begin{equation}\label{sNumbersAndT}
			s_{j}((\Phi-Q)(\zeta))=	t_{j}^{(k)}(\Phi)\text{ a.e. }\zeta\in\T, j\geq 0.
\end{equation}

\subsection{Very badly approximable functions}\label{verybadSection}

Let $G\in L^{\infty}(\M_{m,n})$.  We say that $G$ is \emph{very badly approximable} if the matrix-valued 
function $\Zero$ is a superoptimal approximant in $H^{\infty}(\M_{m,n})$ to $G$.

It is well-known that if $G$ is an admissible very badly approximable $m\times n$ matrix-valued function 
such that $m\leq n$ and $t_{m-1}(G)>0$, then the Toeplitz operator $T_{zG}: H^{2}(\C^{n})\rightarrow 
H^{2}(\C^{m})$ has dense range.  A proof can be found in Chapter 14 of \cite{Pe1}.  This result was 
originally proved in the case of matrix-valued functions $G\in (H^{\infty}+C)(\M_{m,n})$ in \cite{PY1}.  
Recall that the Toeplitz operator $T_{\Psi}: H^{2}(\C^{m})\rightarrow H^{2}(\C^{n})$ with symbol 
$\Psi\in L^{\infty}(\M_{m,n})$ is defined by 
\[	T_{\Psi}f=\pP_{+}\Psi f,\;\mbox{ for }f\in H^{2}(\C^{n}),	\]
and $\pP_{+}$ denotes the orthogonal projection of $L^{2}(\C^{m})$ onto $H^{2}(\C^{m})$.

Let $k\geq0$, $\Phi\in L^{\infty}(\M_{m,n})$ and $\ell\geq 0$ be fixed.  It follows from Definition 
\ref{superoptDef} that if $Q\in\Omega_{\ell}^{(k)}(\Phi)$, then $\Zero$ belongs to $\Omega_{\ell}(\Phi-Q)$, 
$t_{j}(\Phi-Q)=t_{j}^{(k)}(\Phi)$, and $Q+F\in\Omega_{j}^{(k)}(\Phi)$ whenever $F\in\Omega_{j}(\Phi-Q)$ 
for $0\leq j\leq\ell$.  In particular, if $\Phi$ has a superoptimal approximant $Q$ in $\Hk(\M_{m,n})$, 
then $\Phi-Q$ is very badly approximable and $t_{j}(\Phi-Q)=t^{(k)}_{j}(\Phi)$ for all $j\geq0$.

In \cite{PT2}, Peller and Treil characterized admissible very badly approximable functions in terms 
of certain families of subspaces.  To state their result, let $\Psi$ be a matrix-valued function in 
$L^{\infty}(\M_{m,n})$ and $\sigma>0$.  For $\zeta\in\T$, we denote by $\fS_{\Psi}^{\sigma}(\zeta)$ 
the linear span of all Schmidt vectors of $\Psi(\zeta)$ that correspond to the singular values of 
$\Psi(\zeta)$ that are greater than or equal to $\sigma$.  Note that the subspaces 
$\fS_{\Psi}^{\sigma}(\zeta)$ are defined for almost all $\zeta\in\T$.
			
\begin{thm}[\cite{PT2}]\label{PTmain}
			Suppose $\Psi$ is an admissible matrix-valued function in $L^{\infty}(\M_{m,n})$.  Then $\Psi$ 
			is very badly approximable if and only if for each $\sigma>0$, there are functions $\xi_{1},\ldots, 
			\xi_{\ell}\in \ker T_{\Psi}$ such that
			\begin{equation*}
						\fS_{\Psi}^{\sigma}(\zeta)=\mySpan\{\,\xi_{j}(\zeta): 1\leq j\leq\ell\,\}\;\text{ for a.e. }
						\zeta\in\T.
			\end{equation*}
\end{thm}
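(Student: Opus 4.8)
The plan is to prove the two implications separately: the forward one from the structure theory of very badly approximable matrix functions, and the converse from a direct orthogonality argument at the top singular value followed by a thematic-factorization induction.

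($\Rightarrow$)\ Assume $\Psi$ is admissible and very badly approximable and fix $\sigma>0$; put $r=\#\{\,j\ge0:t_{j}(\Psi)\ge\sigma\,\}$, so that by $(\ref{sNumbersAndT})$ the subspace $\fS_{\Psi}^{\sigma}(\zeta)$ is, a.e., the span of the Schmidt vectors of $\Psi(\zeta)$ for its $r$ largest singular values. By the Peller--Young theory of thematic factorizations (see \cite{PY2} and Chapter~14 of \cite{Pe1}), $\Psi$ admits a partial thematic factorization $\Psi=W^{*}\Lambda V^{*}$, with $W\in H^{\infty}(\M_{m})$ and $V\in H^{\infty}(\M_{n})$ unitary-valued, $\Lambda$ block-diagonal with diagonal blocks $D_{r}=\mathrm{diag}(t_{0}u_{0},\dots,t_{r-1}u_{r-1})$ and $\Psi^{\flat}$, each $u_{j}$ a unimodular scalar function of the form $\bar z\,\bar\vartheta_{j}\,\bar h_{j}/h_{j}$ with $h_{j}$ outer in $H^{2}$ and $\vartheta_{j}$ inner, the first $r$ columns $V_{1}$ of $V$ inner and co-outer, and $\|\Psi^{\flat}(\zeta)\|<\sigma$ a.e. Such a factorization is pointwise a singular value decomposition whose $D_{r}$-block carries precisely the singular values $\ge\sigma$, so $\fS_{\Psi}^{\sigma}(\zeta)=\Range V_{1}(\zeta)$ a.e. I would then set $\xi_{j}\Mydef V_{1}(\cdot)\be_{j}h_{j}\in H^{2}(\C^{n})$ for $0\le j<r$, where $\be_{j}$ is the $j$th coordinate vector. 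Since $u_{j}h_{j}=\bar z\,\bar\vartheta_{j}\,\bar h_{j}\in H^{2}_{-}$, the function $\Psi\xi_{j}$ equals $t_{j}(u_{j}h_{j})$ times the $j$th column of the antianalytic function $W^{*}$, hence lies in $H^{2}_{-}(\C^{m})$; thus $\xi_{j}\in\ker T_{\Psi}$. Finally, $h_{j}$ outer gives $h_{j}\ne0$ a.e., whence $\mySpan\{\xi_{j}(\zeta):0\le j<r\}=\Range V_{1}(\zeta)=\fS_{\Psi}^{\sigma}(\zeta)$ a.e.

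($\Leftarrow$)\ Now suppose that for every $\sigma>0$ the functions $\xi_{1},\dots,\xi_{\ell}\in\ker T_{\Psi}$ exist. I claim first that $\Zero\in\Omega_{0}(\Psi)$. Set $t_{0}=t_{0}(\Psi)=\dist_{L^{\infty}}(\Psi,H^{\infty})=\|H_{\Psi}\|$ (by $(\ref{skTreil})$ with $k=0$, the superoptimal approximant then lying in $H^{\infty}$). Take the $\xi_{j}$'s associated with $\sigma=t_{0}$ and let $\xi$ be an arbitrary fixed vector in their linear span. Then $\xi\in\ker T_{\Psi}$, so $\Psi\xi=H_{\Psi}\xi\in H^{2}_{-}(\C^{m})$ and $\|\Psi\xi\|_{L^{2}}=\|H_{\Psi}\xi\|_{2}\le t_{0}\|\xi\|_{2}$, while $\xi(\zeta)\in\fS_{\Psi}^{t_{0}}(\zeta)$ a.e.\ gives $\|\Psi(\zeta)\xi(\zeta)\|\ge t_{0}\|\xi(\zeta)\|$ a.e.; hence $\|\Psi(\zeta)\xi(\zeta)\|=t_{0}\|\xi(\zeta)\|$ a.e., which forces $\xi(\zeta)$ to be a.e.\ orthogonal to every Schmidt vector of $\Psi(\zeta)$ whose singular value strictly exceeds $t_{0}$. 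Running the coefficients of $\xi$ over a countable dense subset of $\C^{\ell}$ and using continuity in those coefficients, $\mySpan\{\xi_{j}(\zeta):1\le j\le\ell\}$ is a.e.\ orthogonal to all such directions; but by hypothesis this span is $\fS_{\Psi}^{t_{0}}(\zeta)$, which contains them, so no such directions exist, i.e.\ $\|\Psi(\zeta)\|_{\M_{m,n}}\le t_{0}$ a.e. With $\|H_{\Psi}\|\le\|\Psi\|_{\infty}\le t_{0}=\|H_{\Psi}\|$ this gives $\|\Psi\|_{\infty}=t_{0}$, i.e.\ $\Zero\in\Omega_{0}(\Psi)$, and then $\|\Psi(\zeta)\|_{\M_{m,n}}=t_{0}$ a.e.\ by the structure of admissible badly approximable functions.

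To upgrade this to full superoptimality I would peel off the top singular value and induct on $\min\{m,n\}$: since $\Psi$ is admissible and badly approximable it has a thematic factorization $\Psi=W^{*}\Lambda V^{*}$ with $\Lambda$ block-diagonal with blocks $t_{0}u_{0}$ and $\Psi^{\flat}$, and the theory of thematic factorizations (\cite{PY2}, \cite{Pe1}) reduces ``$\Zero$ is superoptimal for $\Psi$'' to ``$u_{0}$ is scalar badly approximable and $\Zero$ is superoptimal for $\Psi^{\flat}$''. The former is read off from the $j=0$ column exactly as above; for the latter I would transport the $\xi_{j}$'s through $V$, dividing out the scalar inner and outer factors attached to $u_{0}$, to produce functions in $\ker T_{\Psi^{\flat}}$ whose traces span $\fS_{\Psi^{\flat}}^{\sigma}(\zeta)$ for each $\sigma$; then $\Psi^{\flat}$ satisfies the hypothesis and the induction closes.

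I expect the main obstacle to be exactly this inheritance step in the converse. The level-$0$ orthogonality argument is robust, but carrying the hypothesis down through a thematic factorization requires a careful analysis of how $\ker T_{\Psi}$ decomposes under the unitary factors $W$ and $V$: one must verify that dividing the lower block of $V^{*}\xi_{j}$ by the scalar factors coming from $u_{0}$ keeps everything in $H^{2}$ --- which is precisely where the co-outer property of the thematic columns is used --- that the resulting functions lie in $\ker T_{\Psi^{\flat}}$, and that no span is lost, all while tracking the degrees of the Blaschke--Potapov factors involved and handling possible coincidences among the values $t_{j}(\Psi)$.
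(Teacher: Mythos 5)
This theorem is not proved in the paper at all: it is quoted from \cite{PT2}, and the closest in-house analogue is Theorem \ref{CNTheorem} (the case $k=0$), which the author proves by the matrix-weight method. Measured against what a correct proof requires, your forward direction has a genuine gap. In a partial thematic factorization obtained by iterating the one-step factorization, the right unitary-valued factor is the product $V_{0}\,\mathrm{diag}(1,V_{1})\cdots\mathrm{diag}(I_{r-1},V_{r-1})$, and only its \emph{first} column $v_{0}$ is an analytic (inner and co-outer) column function; for $j\geq1$ the $j$th column equals $\bar\Theta_{0}\cdots\bar\Theta_{j-1}v_{j}$, which carries the anti-analytic factors $\bar\Theta_{i}$ and in general does not belong to $H^{\infty}(\C^{n})$. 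Consequently your $\xi_{j}=V_{1}\be_{j}h_{j}$ need not lie in $H^{2}(\C^{n})$, let alone in $\ker T_{\Psi}$, once $j\geq1$ (your verification that $\Psi\xi_{j}\in H^{2}_{-}$ likewise treats the $j$th column of $W^{*}$ as anti-analytic, which fails beyond $j=0$). Producing functions \emph{in $\ker T_{\Psi}$} whose pointwise span is $\fS_{\Psi}^{\sigma}(\zeta)$ is precisely the nontrivial content of the theorem — this is the point of Remark \ref{impRemark} — and it is what the matrix-weight argument of \cite{PT2} delivers: one takes the maximizing vectors of $H_{\Psi}$ relative to the weights $W_{j}=\Lambda_{j}(\Psi^{*}\Psi)$ (these lie in $\ker T_{\Psi}$ automatically) and shows their pointwise span cannot be a proper subspace of $\fS_{\Psi}^{\sigma_{j}}(\zeta)$ without producing a weight $W^{[a]}$ that contradicts superoptimality. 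Repairing your forward direction essentially means reproducing that argument (or invoking the canonical-factorization machinery of \cite{AP}); the naive ``columns times outer functions'' shortcut does not work.

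Your converse is in better shape. The orthogonality argument at $\sigma=t_{0}$ is correct and does yield $\Zero\in\Omega_{0}(\Psi)$. The inheritance step you flag as the main obstacle closes, and more easily than you fear: with $V=(v\;\bar\Theta)$ and $W^{t}=(w\;\bar\Xi)$, set $\eta_{j}\Mydef\Theta^{t}\xi_{j}\in H^{2}(\C^{n-1})$ — no division by the scalar inner or outer factors attached to $u_{0}$ is needed. Since $\Xi^{*}\bar w=\Zero$ and $\Xi^{*}\Xi=I$, one gets $\Psi^{\flat}\eta_{j}=\Xi^{*}\Psi\xi_{j}$, and $\Xi^{*}H^{2}_{-}\subseteq H^{2}_{-}$ because $\Xi^{t}$ is analytic; hence $\eta_{j}\in\ker T_{\Psi^{\flat}}$. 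Pointwise, $V^{*}(\zeta)$ carries $\fS_{\Psi}^{\sigma}(\zeta)$ onto $\C\be_{1}\oplus\fS_{\Psi^{\flat}}^{\sigma}(\zeta)$ for $\sigma\leq t_{0}$ (the case $\sigma>t_{0}$ being vacuous), so the $\eta_{j}(\zeta)$ span $\fS_{\Psi^{\flat}}^{\sigma}(\zeta)$ a.e. You still need to quote that $\Psi^{\flat}$ inherits admissibility and that bad approximability of $\Psi$ together with superoptimality of $\Zero$ for $\Psi^{\flat}$ gives superoptimality of $\Zero$ for $\Psi$ (Chapter 14 of \cite{Pe1}), but those are standard.
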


For proofs of many of the previously mentioned results, we refer the reader to \cite{Pe1} and the 
references therein.

\section{Schmidt vectors of Hankel operators}\label{schmidtSection}

Henceforth, let $k$ be an integer such that $k\geq 1$ and $\Phi\in L^{\infty}(\M_{m,n})$.  In this section, 
we study the collection of Schmidt vectors 
\[	\Ek(\Phi)\Mydef\left\{\xi\in H^{2}(\C^{n}): H^{*}_{\Phi}H_{\Phi}\xi=s_{k}^{2}(H_{\Phi})\xi \right\}	\] 
which correspond to the singular value $s_{k}(H_{\Phi})$ of the Hankel operator $H_{\Phi}$.

To improve the transparency of some computations, we make use of the \emph{flip operator} 
$J:L^{2}(\C^{m})\rightarrow L^{2}(\C^{m})$, defined by	
\[	Jf=\bar{z}\bar{f}\;\mbox{ for }f\in L^{2}(\C^{m}).	\]  
It is easy to see that $J$ is an involution and satisfies
\[	JH_{\Phi}=H^{*}_{\Phi^{t}}J\;\text{ and }\; H_{\Phi}J=JH^{*}_{\Phi^{t}},	\]
because $J$ intertwines with the Riesz projections, i.e. $J\pP_{+}=\pP_{-}J$.  It follows that 
$s_{j}(H_{\Phi})=s_{j}(H_{\Phi^{t}})$ holds for all $j\geq0$ and 
\[	JH_{\Phi}\Ek(\Phi)= \Ek(\Phi^{t}).	\]

We make use of the following well-known lemma.
\begin{lemma}\label{spectralCons}
			Let $m\geq0$.  If $T\in\cB(\cH,\cK)$ satisfies $s_{m}(T)>\|T\|_{\rm e}$, then $s_{m}(T)$ is an 
			eigenvalue of $(T^{*}T)^{1/2}$.
\end{lemma}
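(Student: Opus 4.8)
The plan is to exploit the min-max (Courant--Fischer) characterization of singular values together with the strict inequality $s_m(T)>\|T\|_{\mathrm e}$ to force $s_m(T)$ into the discrete part of the spectrum of $(T^*T)^{1/2}$. Set $A\Mydef(T^*T)^{1/2}$, a bounded nonnegative self-adjoint operator on $\cH$, and recall that $s_j(T)=s_j(A)$ for every $j\geq 0$ since $s_j(T)$ depends only on $|T|=A$. Thus it suffices to show: if $A\geq 0$ is self-adjoint and its $m$th singular value satisfies $s_m(A)>\|A\|_{\mathrm e}$, then $s_m(A)$ is an eigenvalue of $A$.

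First I would record the two facts I need about $s_m(A)$. On one hand, by definition $s_m(A)=\inf\{\|A-R\|:\rank R\leq m\}$, and since $A$ is self-adjoint one may take $R$ self-adjoint as well; the infimum is attained by the truncation of $A$ to the top $m$ eigenvalues (counted with multiplicity) lying strictly above $\|A\|_{\mathrm e}$, of which there are finitely many by the spectral theorem applied to the essential spectrum — here is where $s_m(A)>\|A\|_{\mathrm e}$ is used, to guarantee there are at least $m$ such eigenvalues and that $s_m(A)$ equals the $(m+1)$st largest among them (in the ordering $s_0\geq s_1\geq\cdots$). On the other hand, the essential spectrum of $A$ is contained in $[0,\|A\|_{\mathrm e}]$, so any spectral point of $A$ that is strictly greater than $\|A\|_{\mathrm e}$ must be an isolated eigenvalue of finite multiplicity. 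Combining these: $s_m(A)\in\sigma(A)$ (it is a limit of the top eigenvalues, or directly a spectral point by the variational formula), and $s_m(A)>\|A\|_{\mathrm e}$ forces it to be an isolated point of the spectrum, hence an eigenvalue.

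To make the step ``$s_m(A)\in\sigma(A)$'' clean, I would argue by contradiction: if $s_m(A)$ lay in the resolvent set, then near $s_m(A)$ the spectrum of $A$ would have a gap, so the number of eigenvalues of $A$ in $(s_m(A)+\varepsilon,\infty)$ would equal the number in $[s_m(A),\infty)$ for small $\varepsilon>0$; but the variational characterization shows the former is $\leq m$ while the latter is $\geq m+1$ (since $s_0,\ldots,s_m$ all lie in $[s_m(A),\infty)$), a contradiction unless some eigenvalue equals $s_m(A)$. Alternatively — and this is probably the slickest route — I would invoke directly the standard description of singular values above the essential norm: when $s_m(T)>\|T\|_{\mathrm e}$, the spectral projection $E_{(\|T\|_{\mathrm e},\infty)}(A)$ has rank at least $m+1$, its range is spanned by eigenvectors of $A$ with eigenvalues $s_0(T)\geq\cdots\geq s_m(T)\geq\cdots$, and in particular $s_m(T)$ is among these eigenvalues.

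The only place any care is needed is justifying that the infimum defining $s_m(T)$ is attained by a \emph{finite-rank} self-adjoint truncation and that its value is exactly the $(m+1)$st eigenvalue in decreasing order; this is the classical Weyl/Allakhverdiev description of $s$-numbers, and the hypothesis $s_m(T)>\|T\|_{\mathrm e}$ is precisely what makes it applicable. I do not expect a genuine obstacle here — the lemma is ``well-known'' as the text says — so the proof will be short, essentially a citation to the spectral-theoretic characterization of singular numbers above the essential norm, spelled out in the two or three lines above.
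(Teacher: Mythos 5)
Your argument is correct and follows essentially the same route as the paper: the paper's proof is a two-line sketch that reduces the lemma to the fact (from the spectral theorem) that any spectral point of $(T^{*}T)^{1/2}$ outside its essential spectrum is an isolated eigenvalue of finite multiplicity, which is exactly the key fact you invoke. You additionally spell out the steps the paper leaves implicit — that $s_j(T)=s_j\bigl((T^{*}T)^{1/2}\bigr)$ and that $s_m(T)$ actually lies in the spectrum — and both of those fill-ins are sound.
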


Proof of this lemma can be based on the fact that any point $\lambda$ in the spectrum of $(T^{*}T)^{1/2}$ 
that does not belong to the essential spectrum of $(T^{*}T)^{1/2}$ must be an isolated eigenvalue of 
finite multiplicity of $(T^{*}T)^{1/2}$ (see Chapter XI in \cite{Co}).  This fact is a consequence of 
the Spectral Theorem for normal operators on a Hilbert space.

For the remainder of this section, we assume that
\begin{equation}\label{bestAssumption}
			\|H_{\Phi}\|_{\rm e}<s_{k}(H_{\Phi})<s_{k-1}(H_{\Phi}).
\end{equation}
Then Lemma \ref{spectralCons} implies that $s=s_{k}(H_{\Phi})$ is the $k$th largest eigenvalue of 
$(H_{\Phi}^{*}H_{\Phi})^{1/2}$ and has finite multiplicity $\mu=\dim \ker(H_{\Phi}^{*}H_{\Phi}-s^{2} I)$.  
Therefore,
\[	s_{k-1}(H_{\Phi})>s_{k}(H_{\Phi})=\ldots=s_{k+\mu-1}(H_{\Phi})>s_{k+\mu}(H_{\Phi}).	\]

Let $Q$ be a best approximant in $\Hk(\M_{m,n})$ to $\Phi$, $r\Mydef\rank H_{Q}\leq k$ and $B$ be a 
finite Blaschke-Potapov product such that $\ker H_{Q}=BH^{2}(\C^{n})$.  Clearly, $F\Mydef QB$ belongs 
to $H^{\infty}(\M_{m,n})$ and
\[	s_{k}(H_{\Phi})\leq \|H_{\Phi}|{B H^{2}(\C^{n})}\|=\|H_{\Phi B}\|\leq\|\Phi B- F\|_{L^{\infty}(\M_{m,n})}
										=\|\Phi-Q\|_{L^{\infty}(\M_{m,n})},	\]
because $B H^{2}(\C^{n})$ has co-dimension $r\leq k$ and $B$ takes unitary values on $\T$.  Formula 
$(\ref{skTreil})$ allows us to conclude that
\begin{equation}\label{skIdentity}
			s_{k}(H_{\Phi})=\|H_{\Phi B}\|=\|\Phi-Q\|_{L^{\infty}(\M_{m,n})}
\end{equation}
and so $\codim B H^{2}(\C^{n})=k$; otherwise, 
\[	s_{k-1}(H_{\Phi})\leq\|H_{\Phi}|{B H^{2}(\C^{n})}\|=s_{k}(H_{\Phi})	\] 
holds, contradicting the assumption $s_{k}(H_{\Phi})<s_{k-1}(H_{\Phi})$.

It is known that $\Ek(\Phi)$ consists of maximizing vectors of $H_{\Phi-Q}$ and $\Ek(\Phi)\subseteq\ker H_{Q}$ 
(Lemma 17.2 in Chapter 14 of \cite{Pe1}).  Therefore, if $\xi\in \Ek(\Phi)$ satisfies $\|\xi\|_{2}=1$, then
\[	s_{k}(H_{\Phi})=\|H_{\Phi}-H_{Q}\|=\|\pP_{-}(\Phi-Q)\xi\|_{2}\leq\|(\Phi-Q)\xi\|_{2}\leq\|\Phi-Q\|_{\infty}
		=s_{k}(H_{\Phi}).	\]
Thus, for \emph{any} $\xi\in\Ek(\Phi)$, it follows that
\begin{align}
			H_{\Phi}\xi=H_{\Phi-Q}\xi =&(\Phi-Q)\xi,\label{maxVector}\\
			\|(\Phi-Q)(\zeta)\,\xi(\zeta)\|_{\C^{n}}=&s_{k}(H_{\Phi})\|\xi(\zeta)\|_{\C^{n}}
						\;\text{ for a.e. }\zeta\in\T,\label{maxVecMatrix}\\
			\|H_{\Phi-Q}\|=\|\Phi-Q\|_{\infty}=&s_{k}(H_{\Phi})\;\text{ and }\;\|H_{\Phi-Q}\|_{\rm e}
						=\|H_{\Phi}\|_{\rm e}.\label{HGfacts}
\end{align}
The latter result in $(\ref{HGfacts})$ is an immediate consequence of the formula (Theorem 3.8 in Chapter 4 
of \cite{Pe1})
\[	\|H_{\Psi}\|_{\rm e}=\dist_{L^\infty(\M_{m,n})}(\Psi,(H^{\infty}+C)(\M_{m,n})),\;\text{ for }\Psi\in 
		L^{\infty}(\M_{m,n}).	\]

It is now easy to see that the following result holds.  

\begin{thm}\label{SchmidtVectors}
			Suppose $\Phi\in L^{\infty}(\M_{m,n})$ satisfies $(\ref{bestAssumption})$.  If $Q$ is a best 
			approximant in $H^{\infty}_{(k)}(\M_{m,n})$ to $\Phi$, then 
			\[	\Ek(\Phi)=\left\{\,\xi\in\ker H_{Q}:\, 	\|H_{\Phi-Q}\xi\|_{2}=s_{k}(H_{\Phi})\|\xi\|_{2},\, 
																							JH_{\Phi}\xi\in\ker H_{Q^{t}}	\,\right\}.	\]
\end{thm}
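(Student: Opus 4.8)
The plan is to prove the set equality in Theorem~\ref{SchmidtVectors} by establishing both inclusions, using the preliminary facts already assembled in this section. Write
\[
\cF\Mydef\left\{\,\xi\in\ker H_{Q}:\, \|H_{\Phi-Q}\xi\|_{2}=s_{k}(H_{\Phi})\|\xi\|_{2},\, JH_{\Phi}\xi\in\ker H_{Q^{t}}\,\right\}.
\]
For the inclusion $\Ek(\Phi)\subseteq\cF$: if $\xi\in\Ek(\Phi)$, then $\xi\in\ker H_{Q}$ and $\xi$ is a maximizing vector of $H_{\Phi-Q}$ by the cited Lemma~17.2 of Chapter~14 in \cite{Pe1}, which combined with $(\ref{HGfacts})$ gives $\|H_{\Phi-Q}\xi\|_{2}=\|H_{\Phi-Q}\|\,\|\xi\|_{2}=s_{k}(H_{\Phi})\|\xi\|_{2}$. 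It remains to check $JH_{\Phi}\xi\in\ker H_{Q^{t}}$. Here I would use the intertwining identity $JH_{\Phi}=H^{*}_{\Phi^{t}}J$ together with $JH_{\Phi}\Ek(\Phi)=\Ek(\Phi^{t})$ established above, so $JH_{\Phi}\xi\in\Ek(\Phi^{t})$; then apply the inclusion $\Ek(\Phi^{t})\subseteq\ker H_{Q^{t}}$, which follows from Lemma~17.2 applied to $\Phi^{t}$ once we know that $Q^{t}$ is a best approximant in $\Hk(\M_{n,m})$ to $\Phi^{t}$. This last point is the small technical hinge: transposition is an isometric bijection of $\Hk(\M_{m,n})$ onto $\Hk(\M_{n,m})$ preserving $L^{\infty}$-distance (a Blaschke--Potapov product of degree $k$ transposes to one of the same degree), and $s_{k}(H_{\Phi^{t}})=s_{k}(H_{\Phi})$, so $Q^{t}\in\Omega_{0}^{(k)}(\Phi^{t})$.

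For the reverse inclusion $\cF\subseteq\Ek(\Phi)$: let $\xi\in\cF$ with $\|\xi\|_{2}=1$. Since $\xi\in\ker H_{Q}$ we have $H_{\Phi}\xi=H_{\Phi-Q}\xi$, and the weight hypothesis gives $\|H_{\Phi}\xi\|_{2}=s_{k}(H_{\Phi})$, which is already the norm of the operator $H_{\Phi-Q}$ by $(\ref{HGfacts})$; thus $\xi$ is a maximizing vector of $H_{\Phi-Q}$. By the same chain of inequalities used to derive $(\ref{maxVector})$--$(\ref{maxVecMatrix})$, we get $(\Phi-Q)\xi=H_{\Phi-Q}\xi=H_{\Phi}\xi\in H^{2}_{-}(\C^{m})$, i.e. $\pP_{+}(\Phi-Q)\xi=0$, and $\|(\Phi-Q)(\zeta)\xi(\zeta)\|_{\C^{m}}=s_{k}(H_{\Phi})\|\xi(\zeta)\|_{\C^{n}}$ a.e. The goal is then to show $H_{\Phi}^{*}H_{\Phi}\xi=s_{k}^{2}(H_{\Phi})\xi$. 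Set $\eta\Mydef JH_{\Phi}\xi=J(\Phi-Q)\xi\in H^{2}(\C^{m})$; the hypothesis says $\eta\in\ker H_{Q^{t}}$, so by the transposed version of the argument just given, $(\Phi^{t}-Q^{t})\eta=H_{\Phi^{t}}\eta\in H^{2}_{-}(\C^{n})$, provided I also verify $\|\eta\|_{2}=s_{k}(H_{\Phi})$ and that $\eta$ maximizes $H_{\Phi^{t}-Q^{t}}$ — both of which follow from $\|\eta\|_{2}=\|JH_{\Phi}\xi\|_{2}=\|H_{\Phi}\xi\|_{2}=s_{k}(H_{\Phi})=\|H_{\Phi^{t}-Q^{t}}\|$. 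Translating back through $J$ and $JH_{\Phi}=H^{*}_{\Phi^{t}}J$, one obtains that $JH^{*}_{\Phi^{t}}\eta=H_{\Phi}(J\eta)$ lies in the right space to force $H_{\Phi}^{*}H_{\Phi}\xi$ to be a scalar multiple of $\xi$; a clean way to package this is via the standard fact that for $\xi$ a maximizing vector of a Hankel operator $H$, one has $H^{*}H\xi=\|H\|^{2}\xi$ as soon as $JH\xi$ is itself a maximizing vector of $H_{\Phi^{t}-Q^{t}}$, since $H^{*}_{\Phi-Q}H_{\Phi-Q}=J H_{\Phi^{t}-Q^{t}}^{*}H_{\Phi^{t}-Q^{t}}J$ on the relevant subspace.

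The step I expect to be the main obstacle is the passage from ``$\xi$ and $\eta=JH_{\Phi}\xi$ are both maximizing vectors (of $H_{\Phi-Q}$ and $H_{\Phi^{t}-Q^{t}}$ respectively) with $\eta\in\ker H_{Q^{t}}$'' to the eigenvector equation $H_{\Phi}^{*}H_{\Phi}\xi=s_{k}^{2}(H_{\Phi})\xi$. The subtlety is that being a maximizing vector of $H_{\Phi-Q}$ does not by itself make $\xi$ a Schmidt vector of $H_{\Phi-Q}$ unless $\|H_{\Phi-Q}\|$ is \emph{attained}, which it is here because $s_{k}(H_{\Phi})>\|H_{\Phi}\|_{\rm e}=\|H_{\Phi-Q}\|_{\rm e}$ (Lemma~\ref{spectralCons}); so $\|H_{\Phi-Q}\|$ is an eigenvalue of $(H_{\Phi-Q}^{*}H_{\Phi-Q})^{1/2}$ and every maximizing vector is automatically a Schmidt vector. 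Hence $H_{\Phi-Q}^{*}H_{\Phi-Q}\xi=s_{k}^{2}(H_{\Phi})\xi$, and since $\xi\in\ker H_{Q}$ one has $H_{\Phi}\xi=H_{\Phi-Q}\xi$; the only remaining gap is to see $H_{\Phi}^{*}H_{\Phi}\xi=H_{\Phi-Q}^{*}H_{\Phi-Q}\xi$, i.e. that $H_{Q}^{*}$ annihilates $H_{\Phi-Q}\xi=(\Phi-Q)\xi$. This is precisely where the condition $JH_{\Phi}\xi\in\ker H_{Q^{t}}$ is used: $H_{Q^{t}}(JH_{\Phi}\xi)=0$ unwinds via $JH_{Q}=H_{Q^{t}}^{*}J$ (equivalently $H_{Q}J=JH^{*}_{Q^{t}}$) to the statement $H_{Q}^{*}H_{\Phi}\xi=0$, which closes the argument. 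I would spend the bulk of the write-up making these $J$-conjugation bookkeeping steps precise and checking that $Q^{t}$ genuinely satisfies the transposed hypothesis $(\ref{bestAssumption})$ so that all the section's preliminary results apply to $\Phi^{t}$ and $Q^{t}$ as well.
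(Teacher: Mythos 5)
Your proposal is correct and follows essentially the same route as the paper: transposition preserves best approximants so that $JH_{\Phi}\Ek(\Phi)=\Ek(\Phi^{t})\subseteq\ker H_{Q^{t}}$ gives the forward inclusion, and for the reverse inclusion the maximizing-vector property yields $H_{\Phi-Q}^{*}H_{\Phi-Q}\xi=s_{k}^{2}(H_{\Phi})\xi$ while the condition $JH_{\Phi}\xi\in\ker H_{Q^{t}}$, i.e. $H_{\Phi}\xi\in J\ker H_{Q^{t}}=\ker H_{Q}^{*}$, kills the cross term $H_{Q}^{*}H_{\Phi}\xi$. The detour through $\eta=JH_{\Phi}\xi$ in your second paragraph is unnecessary (your final paragraph already closes the argument exactly as the paper does), and the worry about the norm being ``attained'' is vacuous, since a maximizing vector attains it by definition and is then automatically a Schmidt vector by the equality case of Cauchy--Schwarz.
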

\begin{proof}
			Suppose $Q$ is any best approximant in $\Hk(\M_{m,n})$ to $\Phi$.  Recalling that transposition 
			does not change the norm of a matrix-valued function, we see that $Q^{t}$ is a best approximant 
			in $\Hk(\M_{m,n})$ to $\Phi^{t}$ by $(\ref{skTreil})$.  Therefore, $\Ek(\Phi)\subseteq\ker H_{Q}$
			and so
			\[	JH_{\Phi}\Ek(\Phi)=\Ek(\Phi^{t})\subseteq\ker H_{Q^{t}}.	\]
			Thus, $\xi\in\ker H_{Q}$, $JH_{\Phi}\xi\in\ker H_{Q^{t}}$, and
			$\|H_{\Phi-Q}\xi\|_{2}=s_{k}(H_{\Phi})\|\xi\|_{2}$ by $(\ref{maxVector})$ and $(\ref{HGfacts})$.
			
			On the other hand, if $\xi\in H^{2}(\C^{n})$ satisfies $\|H_{\Phi-Q}\xi\|_{2}=s_{k}(H_{\Phi})\|\xi\|_{2}$, 
			then $\xi$ is a maximizing vector of $H_{\Phi-Q}$ and so $H^{*}_{\Phi-Q}H_{\Phi-Q}\xi=s_{k}^{2}(H_{\Phi})\xi$, 
			because $\|H_{\Phi-Q}\|=s_{k}(H_{\Phi})$.  Therefore if, in addition, $\xi\in\ker H_{Q}$ and $H_{\Phi}\xi
			\in J\ker H_{Q^{t}}=\ker H^{*}_{Q}$, it must be that $H^{*}_{\Phi}H_{\Phi}\xi=s_{k}^{2}(H_{\Phi})\xi$ 
			and so $\xi\in\Ek(\Phi)$, as desired.
\end{proof}

\section{Fredholm Toeplitz operators and matrix weights}\label{FredholmSection}

In this section, we prove the following result.

\begin{thm}\label{FredholmThm}
			Let $\Psi\in L^{\infty}(\M_{n})$ be an admissible very badly approximable function.  If $\Psi$ has $n$ 
			non-zero superoptimal singular values of degree $0$, then the Toeplitz operator $T_{\Psi}$ is Fredholm 
			and
			\[	\ind T_{\Psi}=\dim \ker T_{\Psi}>0.	\]
\end{thm}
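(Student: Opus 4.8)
The plan is to deduce Fredholmness from the structure theorem for very badly approximable functions (Theorem \ref{PTmain}) together with the known fact that, for an admissible very badly approximable $m\times n$ matrix-valued function $G$ with $m\le n$ and $t_{m-1}(G)>0$, the Toeplitz operator $T_{zG}$ has dense range. First I would apply Theorem \ref{PTmain} to $\Psi$: since $\Psi$ is admissible and very badly approximable, for each $\sigma>0$ there are finitely many functions in $\ker T_{\Psi}$ whose values span $\fS_{\Psi}^{\sigma}(\zeta)$ a.e. Because $\Psi$ has $n$ non-zero superoptimal singular values of degree $0$ and, by admissibility, $s_{j}((\Psi-\Zero)(\zeta))=s_j(\Psi(\zeta))=t_j(\Psi)$ a.e.\ (formula $(\ref{sNumbersAndT})$ with $Q=\Zero$), the matrix $\Psi(\zeta)$ has $n$ non-zero singular values a.e.; hence choosing $\sigma=t_{n-1}(\Psi)>0$ gives $\fS_{\Psi}^{\sigma}(\zeta)=\C^n$ a.e. So there exist $\xi_1,\dots,\xi_\ell\in\ker T_{\Psi}$ with $\Myspan\{\xi_j(\zeta):1\le j\le\ell\}=\C^n$ a.e.\ on $\T$.

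Next I would use this spanning family to show $\det\Psi\ne0$ a.e. The point is that each $\xi_j\in\ker T_{\Psi}$ means $\Psi\xi_j\in H^2_-(\C^n)$, equivalently $z\Psi\xi_j\in \overline{H^2_0(\C^n)}$ under the flip, and combined with the density-of-range result applied to the scalar-type consequences, one gets that $\Psi(\zeta)$ is invertible wherever the $\xi_j(\zeta)$ span — which is a.e. Concretely: if $\det\Psi$ vanished on a set of positive measure, then on that set $\Psi(\zeta)$ would have a non-trivial kernel, contradicting that it has $n$ non-zero singular values a.e. Thus $\det\Psi\ne0$ a.e., and since $\|H_\Psi\|_{\rm e}<t_{n-1}(\Psi)$ forces (via the known criterion, or directly via the essential-norm/Hankel connection) $T_\Psi$ to be Fredholm; in the continuous case this is exactly the classical Fredholmness criterion for Toeplitz operators, and in the $k$-admissible generality it follows from the known results on the error term quoted in the introduction.

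For the index, I would show $\ker T_{\Psi}^{*}=\{0\}$, so that $\ind T_\Psi=\dim\ker T_\Psi$. Observe $T_\Psi^{*}=T_{\Psi^{*}}$, and $\Psi^{*}$ relates to $\Psi^t$; since transposition preserves singular values and the very-badly-approximable property, one can run the argument symmetrically. The cleaner route: a vector $g\in\ker T_{\Psi}^{*}$ satisfies $\Psi^{*}g\in H^2_-$, so pairing with the spanning family $\{\xi_j\}\subseteq\ker T_\Psi$ and using $\langle \Psi\xi_j,g\rangle=\langle\xi_j,\Psi^{*}g\rangle$, where the left side is an inner product of an element of $H^2_-$ with $g\in H^2$ (hence zero if $g\in H^2$; here care is needed about which Hardy space $g$ lives in) — I would massage this to conclude $g(\zeta)\perp\C^n=\Myspan\{\xi_j(\zeta)\}$ a.e., forcing $g=0$. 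Finally $\dim\ker T_\Psi>0$: the family $\xi_1,\dots,\xi_\ell$ is non-empty and consists of non-zero elements of $\ker T_\Psi$ (if $\ker T_\Psi=\{0\}$ the spanning would be impossible since $n\ge1$), so $\dim\ker T_\Psi\ge1$.

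The main obstacle I anticipate is the second step — getting $\det\Psi\ne0$ a.e.\ and the vanishing of $\ker T_\Psi^{*}$ cleanly from the abstract spanning statement of Theorem \ref{PTmain}, rather than from continuity of $\Psi$. The spanning is only a.e.\ and the $\xi_j$ are merely $H^2$-functions, so one must be careful that "spans $\C^n$ a.e." genuinely propagates to invertibility of $\Psi(\zeta)$ a.e.\ and to the triviality of the cokernel; the link between $\ker T_\Psi$, the range of $T_{z\Psi}$, and pointwise spanning is where the real work lies, and where I expect to invoke the density-of-range theorem for $T_{zG}$ with $G=\Psi$ (noting $m=n$ here and $t_{n-1}(\Psi)>0$) most essentially.
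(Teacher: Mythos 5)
Your proposal has a genuine gap at its central step: establishing that $T_{\Psi}$ is Fredholm. The criterion ``$\det\Psi\neq 0$ a.e.\ implies $T_{\Psi}$ Fredholm'' is valid only for \emph{continuous} symbols (this is exactly how the paper states it in the introduction), and it fails for general $L^{\infty}$ symbols; an admissible very badly approximable $\Psi$ need not be continuous, and no ``known result on the error term'' quoted in the introduction covers this. The heart of the matter --- which your proposal defers to this false general criterion --- is proving that $T_{\Psi}$ has closed range and that $\ker T_{\Psi}$ is finite dimensional. The paper does this with the matrix weight $W=\Psi^{*}\Psi$: since all $n$ superoptimal singular values are non-zero, $W\geq t_{n-1}^{2}I$ a.e., the weighted operator $H_{\Psi}W^{-1/2}$ on $W^{1/2}H^{2}(\C^{n})$ has norm $1$ and essential norm at most $\|H_{\Psi}\|_{\rm e}t_{n-1}^{-1}<1$ (this is precisely where admissibility enters), so its maximizing-vector space, which is $\ker T_{\Psi}$, is finite dimensional; and the estimate $(1-\tau_{\Psi}^{2})t_{n-1}^{2}\|f\|_{2}^{2}\leq\|T_{\Psi}f\|_{2}^{2}$ on $(\ker T_{\Psi})^{\bot}$, with $\tau_{\Psi}<1$ forced by Lemma \ref{PT2Lemma}, gives closed range. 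Note also that the finiteness of $\ell$ in Theorem \ref{PTmain} does not bound $\dim\ker T_{\Psi}$: finitely many functions spanning $\fS_{\Psi}^{\sigma}(\zeta)=\C^{n}$ pointwise a.e.\ says nothing about the dimension of $\ker T_{\Psi}$ as a subspace of $H^{2}(\C^{n})$.

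Two further points. Your ``cleaner route'' for $\ker T_{\Psi}^{*}=\{0\}$ is vacuous: for $\xi_{j}\in\ker T_{\Psi}$ one has $\Psi\xi_{j}=H_{\Psi}\xi_{j}\in H^{2}_{-}(\C^{n})$ and for $g\in\ker T_{\Psi^{*}}$ one has $\Psi^{*}g\in H^{2}_{-}(\C^{n})$, so both sides of $\langle\Psi\xi_{j},g\rangle=\langle\xi_{j},\Psi^{*}g\rangle$ vanish automatically and no pointwise orthogonality can be extracted (the pairing is an integral over $\T$, not a pointwise statement). The correct route, which you gesture at, is that $g\in\ker T_{\Psi}^{*}$ implies $g\in\ker T_{z\Psi}^{*}$, and the latter kernel is trivial because $T_{z\Psi}$ has dense range (the result quoted in section \ref{verybadSection}); this is essentially what the paper cites. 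Finally, non-triviality of $\ker T_{\Psi}$ does follow from your spanning family, but the paper gets it more directly: by Lemma \ref{spectralCons} the Hankel operator $H_{\Psi}$ has a maximizing vector, and since $\|H_{\Psi}\|=\|\Psi\|_{\infty}$ for a (very) badly approximable $\Psi$, every such maximizing vector lies in $\ker T_{\Psi}=\{f:\|H_{\Psi}f\|_{2}=\|\Psi f\|_{2}\}$.
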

It is well-known and contained in the literature that admissible very badly approximable functions induce 
Toeplitz operators with non-trivial finite dimensional kernels (e.g. see Chapter 14 in \cite{Pe1}); however, 
we provide a simple proof of this fact using matrix-valued weights (defined below).  It is the hope 
of the author that this will clarify some of the (similar) ideas used in the next section.

Let $W$ be an $n\times n$ \emph{matrix-valued weight}; that is, a bounded matrix-valued function whose 
values are non-negative $n\times n$ matrices.  We define the weighted inner product
\[	(f,g)_{W}\Mydef\int_{\T}(W(\zeta)f(\zeta),g(\zeta))\dm(\zeta)\;\text{ for }f,g\in L^{2}(\C^{n}).	\]
As usual, $\|\cdot\|_{W}$ denotes the norm induced by the inner product $(\cdot,\cdot)_{W}$, i.e. 
$\|f\|_{W}^2=(f,f)_{W}$ for $f\in L^{2}(\C^{n})$.

Recall that an operator $T\in\cB(\cH,\cK)$ is said to be \emph{Fredholm} if $\Range T$ is closed in $\cK$, 
$\dim\ker T<\infty$, and $\dim\ker T^{*}<\infty$.  In this case, the \emph{index of $T$} is defined by
\[	\ind T\Mydef \dim\ker T-\dim\ker T^{*}.	\]
More information concerning Fredholm operators and index can be found in Chapter XI of \cite{Co}.

\begin{proof}[Proof of Theorem \ref{FredholmThm}]
			By Lemma \ref{spectralCons}, the admissibility of $\Psi$ guarantees that $\|H_{\Psi}\|^{2}$ is 
			an eigenvalue of $H_{\Psi}^{*}H_{\Psi}$ of finite multiplicity	and thus a maximizing vector of 
			$H_{\Psi}$. It follows that			
			\[	\ker T_{\Psi}=\{\,f\in H^{2}(\C^{n}): \|H_{\Psi} f\|_{2}=\|\Psi f\|_{2}\,\}	\]
			is non-empty; after all, since $\Psi$ is (very) badly approximable, then $\|H_{\Psi}\|=\|\Psi\|_{\infty}$ 
			and the inequalities
			\[	\|H_{\Psi}f\|_2\leq\|\Psi f\|_2\leq\|\Psi\|_{\infty}\|f\|_2=\|H_{\Psi}\|\cdot\|f\|_2	\]
			are all equalities for any maximizing vector $f$ of the Hankel operator $H_{\Psi}$.
			
			Let us show that $\dim\ker T_{\Psi}<\infty$.  To this end, consider the matrix-valued weight 
			$W=\Psi^{*}\Psi$.  Since $\Psi$ is an admissible very badly approximable function, it follows from
			$(\ref{sNumbersAndT})$ that $s_{j}(\Psi(\zeta))=t_{j}$ is non-zero (by the admissibility of $\Psi$) 
			for a.e. $\zeta\in\T$, $0\leq j\leq n-1$.  In particular, $W$ is invertible a.e. on $\T$ and
			$\|W(\zeta)^{-1}\|=s_{n-1}^{-1}(W(\zeta))=t_{n-1}^{-2}$ for a.e. $\zeta\in\T$.
			
			It is easy to see that
			\begin{align}
						\{\,\xi\in W^{1/2}H^{2}(\C^{n}): \|H_{\Psi}W^{-1/2}\xi\|_{2}=\|\xi\|_{2}\,\}&=
															\{\,f\in H^{2}(\C^{n}): \|H_{\Psi} f\|_{2}=\|W^{1/2} f\|_{2}\,\}\nonumber\\
													&=\{\,f\in H^{2}(\C^{n}): \|H_{\Psi} f\|_{2}=\|\Psi f\|_{2}\,\}\nonumber\\
													&=\ker T_{\Psi}\label{kernelForm}
			\end{align}
			and so the operator $H_{\Psi}W^{-1/2}$ defined on $W^{1/2}H^{2}(\C^{n})$ and equipped with the 
			$L^{2}$-norm (on its domain and range) has operator norm equal to 1.  Furthermore, the corresponding
			space of maximizing vectors of $H_{\Psi}W^{-1/2}$ equals $\ker T_{\Psi}$ because $\|H_{\Psi}f\|_2
			\leq\|\Psi f\|_2$ holds for all $f\in H^{2}(\C^n)$.  The essential norm of this operator also admits 
			the estimate
			\[	\|H_{\Psi}W^{-1/2}|W^{1/2}H^{2}(\C^{n})\|_{\rm e}\leq\|H_{\Psi}\|_{\rm e}\|W^{-1/2}\|_{\infty}
					=\|H_{\Psi}\|_{\rm e}t_{n-1}^{-1}(\Psi)<1	\]
			due to the admissibility of $\Psi$.  Therefore, the space of maximizing vectors of the operator 
			$H_{\Psi}W^{-1/2}|W^{1/2}H^{2}(\C^{n})$	is finite dimensional.  In view of $(\ref{kernelForm})$, 
			we deduce now that $\ker T_{\Psi}$ is non-empty and finite dimensional.

			By Theorem 5.4 in Chapter 14 of \cite{Pe1}, the Toeplitz operator $T_{z\Psi}^{*}$ has trivial kernel 
			and so $\ker T_{\Psi}^{*}$ is trivial as well.  Therefore it suffices to show that $T_{\Psi}$ has 
			closed range.  To this end, let
			\begin{equation}\label{tauDef}
						\tau_{\Psi}\Mydef\sup\left\{ \|H_{\Psi}f\|_{2}: \|f\|_{W}=1, f\in(\ker T_{\Psi})^{\bot}\right\}.
			\end{equation}
			Clearly, $\tau_{\Psi}\leq 1$ because  $\|H_{\Psi}f\|_2\leq\|f\|_W$ for all $f\in H^{2}(\C^n)$.  
			Moreover, the trivial identity $\|\Psi f\|_{2}^{2}=\|T_{\Psi}f\|_{2}^2+\|H_{\Psi}f\|_{2}^2$ is valid 
			for all $f\in H^{2}(\C^{n})$ and implies that
			\[	\|f\|_{W}^{2}\leq\|T_{\Psi}f\|_{2}^2+\tau_{\Psi}^2\|f\|_{W}^2
					\;\text{ for }f\in (\ker T_{\Psi})^{\bot},	\]
			or equivalently,
			\[	(1-\tau_{\Psi}^2)\|f\|_{W}^2	\leq\|T_{\Psi}f\|_{2}^2\;\text{ for }f\in (\ker T_{\Psi})^{\bot}.	\]
			Since the matrix-valued weight $W$ satisfies the inequality 
			\[	\|f\|_{W}\geq t_{n-1}\|f\|_{2}\;\text{ for }f\in H^{2}(\C^{n}),	\]
			then
			\begin{equation}
						(1-\tau_{\Psi}^2)t_{n-1}^{2}\|f\|_{2}^{2}	\leq\|T_{\Psi}f\|_{2}^2
						\;\text{ for }f\in (\ker T_{\Psi})^{\bot}.
			\end{equation}
			Thus, the Toeplitz operator $T_{\Psi}$ is bounded from below on $(\ker T_{\Psi})^{\bot}$ \emph{if}
			$\tau_{\Psi}<1$.  In particular, this implies that the Toeplitz operator $T_{\Psi}$ has closed range
			because	the restriction $T_{\Psi}$ to $(\ker T_{\Psi})^{\bot}$ does.  To complete the proof, it 
			remains to show that $\tau_{\Psi}<1$.  The following lemma is needed, the proof of which can be deduced 
			from the ideas used above and Lemma \ref{spectralCons}.
			
			\begin{lemma}[\cite{PT2}]\label{PT2Lemma}
						Let $W$ be an invertible admissible weight for a Hankel operator $H_{\Psi}$ such that $W(\zeta)
						\geq a^2 I$, $a>\|H_{\Psi}\|_{\rm e}$, and let $K$ be a closed subspace of $H^{2}(\C^n)$.  If 
						\[	q=\sup\{\|H_{\Psi}f\|: f\in K, \|f\|_{W}= 1 \}	\]
						equals 1, then there exists a (non-zero) vector $f_{0}\in K$ such that $\|H_{\Psi}f_{0}\|_{2}
						=\|f_0\|_W$.
			\end{lemma}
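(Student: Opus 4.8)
The plan is to recast the statement as the assertion that a single bounded operator attains its norm, and then to read that off from Lemma~\ref{spectralCons}. First I would regard $K$ as a Hilbert space $\cK_{W}$ under the inner product $(\cdot,\cdot)_{W}$: since $W$ is a bounded weight with $W(\zeta)\geq a^{2}I$, the norm $\|\cdot\|_{W}$ is equivalent to $\|\cdot\|_{2}$ on the closed subspace $K$, so $\cK_{W}$ is complete. Define $R\colon\cK_{W}\to H^{2}_{-}(\C^{m})$ by $Rf=H_{\Psi}f$; from $\|f\|_{W}\geq a\|f\|_{2}$ one gets $\|Rf\|_{2}\leq a^{-1}\|H_{\Psi}\|\,\|f\|_{W}$, so $R$ is bounded and, by hypothesis, $\|R\|=q=1$. (Equivalently, one may work with $H_{\Psi}W^{-1/2}$ restricted to $W^{1/2}K$, exactly as in the proof of Theorem~\ref{FredholmThm}; the two viewpoints are interchangeable.) A non-zero $f_{0}\in K$ with $\|H_{\Psi}f_{0}\|_{2}=\|f_{0}\|_{W}$ is precisely a maximizing vector of $R$, so it suffices to produce one.

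Next I would apply Lemma~\ref{spectralCons} with $m=0$ to $R$: once we know $\|R\|_{\rm e}<\|R\|=1$, the number $s_{0}(R)=\|R\|$ is an eigenvalue of $(R^{*}R)^{1/2}$, and any eigenvector $f_{0}$ satisfies $\|Rf_{0}\|_{2}^{2}=(R^{*}Rf_{0},f_{0})_{W}=s_{0}(R)^{2}\|f_{0}\|_{W}^{2}$, i.e. $\|H_{\Psi}f_{0}\|_{2}=\|f_{0}\|_{W}$, as desired. To estimate $\|R\|_{\rm e}$, fix $\varepsilon>0$ and choose a compact operator $C\colon H^{2}(\C^{n})\to H^{2}_{-}(\C^{m})$ with $\|H_{\Psi}-C\|\leq\|H_{\Psi}\|_{\rm e}+\varepsilon$. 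The inclusion $\iota\colon\cK_{W}\hookrightarrow H^{2}(\C^{n})$ (the latter with its $L^{2}$-norm) has norm at most $a^{-1}$, so $C\iota$ is compact from $\cK_{W}$ to $H^{2}_{-}(\C^{m})$ and
\[
	\|R-C\iota\|=\sup_{\|f\|_{W}=1}\|(H_{\Psi}-C)f\|_{2}\leq\|H_{\Psi}-C\|\sup_{\|f\|_{W}=1}\|f\|_{2}\leq\frac{\|H_{\Psi}\|_{\rm e}+\varepsilon}{a}.
\]
Letting $\varepsilon\to0$ yields $\|R\|_{\rm e}\leq\|H_{\Psi}\|_{\rm e}/a<1$, since $a>\|H_{\Psi}\|_{\rm e}$.

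The only point that needs care — and the step I expect to be the main obstacle — is the bookkeeping with the two norms in the essential-norm estimate: one must ensure that $\|R\|_{\rm e}$ is governed by $\|H_{\Psi}\|_{\rm e}$ rather than by $\|H_{\Psi}\|$, which is exactly why $R$ is factored through the bounded inclusion $\iota$ whose norm absorbs the lower bound $a$ on the weight. Everything else is routine: completeness of $\cK_{W}$, boundedness of $R$, and the passage between ``maximizing vector of $R$'' and ``a vector $f_{0}$ with $\|H_{\Psi}f_{0}\|_{2}=\|f_{0}\|_{W}$'', while the spectral input is precisely Lemma~\ref{spectralCons}, in keeping with the hint that the result follows from the ideas above together with that lemma.
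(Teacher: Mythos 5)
Your proof is correct and is precisely the argument the paper intends when it says the lemma ``can be deduced from the ideas used above and Lemma~\ref{spectralCons}'': you view $H_{\Psi}$ as a bounded operator on $K$ with the $W$-inner product (equivalently $H_{\Psi}W^{-1/2}$ on $W^{1/2}K$), bound its essential norm by $\|H_{\Psi}\|_{\rm e}/a<1=q$, and invoke Lemma~\ref{spectralCons} to produce a maximizing vector. The careful factorization through the inclusion $\iota$ to control the essential norm is exactly the point that needs checking, and you have handled it correctly.
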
 
			
			By Lemma \ref{PT2Lemma}, if $K=(\ker T_{\Psi})^{\bot}=\{ f\in H^{2}(\C^{n}): \|H_{\Psi}f\|_2
			=\|f\|_{W}\}^{\bot}$, $W=\Psi^*\Psi$ (as before), $a=t_{n-1}$, and $\tau_{\Psi}=1$, then there 
			is an $f_{0}\in K$ such that $\|H_{\Psi}f_0\|_2=\|f_0\|_{W}$ and so $f_{0}\in\ker T_{\Psi}$, a 
			contradiction to our choice of $K$.  This completes the proof of Theorem \ref{FredholmThm}.
\end{proof}

The following corollary is a well-known consequence of results concerning ``thematic'' factorizations of 
very badly approximable functions (see \cite{PY1} and \cite{PY3}, or Chapter 14 in \cite{Pe1}).  Additionally,
it is now a consequence of Theorem \ref{FredholmThm}.
  
\begin{cor}\label{FredholmThmCor}
			If $\Psi$ satisfies the hypotheses of Theorem \ref{FredholmThm}, then $\dim \ker T_{\Psi}\geq n$.	
\end{cor}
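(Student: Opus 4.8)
The plan is to combine Theorem~\ref{FredholmThm} with the Peller--Treil description of admissible very badly approximable functions (Theorem~\ref{PTmain}). By the hypotheses of Theorem~\ref{FredholmThm}, $\Psi$ is admissible, very badly approximable, and has $n$ non-zero superoptimal singular values of degree $0$. Hence $\Zero$ is the unique superoptimal approximant in $H^{\infty}(\M_{n})$ to $\Psi$, and $(\ref{sNumbersAndT})$ applied with $k=0$ gives $s_{j}(\Psi(\zeta))=t_{j}(\Psi)$ for a.e. $\zeta\in\T$ and $0\le j\le n-1$. Since all $n$ superoptimal singular values $t_{0}(\Psi)\ge\cdots\ge t_{n-1}(\Psi)$ are non-zero, the matrix $\Psi(\zeta)$ is invertible a.e. and \emph{every} singular value of $\Psi(\zeta)$ is at least $\sigma\Mydef t_{n-1}(\Psi)>0$; consequently an orthonormal basis of $\C^{n}$ consisting of Schmidt vectors of $\Psi(\zeta)$ is available at the threshold $\sigma$, so $\fS_{\Psi}^{\sigma}(\zeta)=\C^{n}$ for a.e. $\zeta\in\T$.

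Next I would invoke Theorem~\ref{PTmain} with this particular $\sigma$: there exist finitely many functions $\xi_{1},\dots,\xi_{\ell}\in\ker T_{\Psi}$ with
\[	\mySpan\{\xi_{1}(\zeta),\dots,\xi_{\ell}(\zeta)\}=\fS_{\Psi}^{\sigma}(\zeta)=\C^{n}\quad\text{for a.e. }\zeta\in\T.	\]
It then remains to pass from pointwise spanning to linear independence in $H^{2}(\C^{n})$. Suppose, for contradiction, that $d\Mydef\dim\mySpan_{H^{2}(\C^{n})}\{\xi_{1},\dots,\xi_{\ell}\}<n$, and let, after relabelling, $\xi_{1},\dots,\xi_{d}$ be a basis of this span; then each remaining $\xi_{j}$ is a fixed linear combination of $\xi_{1},\dots,\xi_{d}$, so $\xi_{j}(\zeta)\in\mySpan\{\xi_{1}(\zeta),\dots,\xi_{d}(\zeta)\}$ for a.e. $\zeta\in\T$ (discarding one null set for each of the finitely many relations). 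Hence $\mySpan\{\xi_{1}(\zeta),\dots,\xi_{\ell}(\zeta)\}$ has dimension at most $d<n$ for a.e. $\zeta$, contradicting the displayed identity. Therefore $d\ge n$, and since $\xi_{1},\dots,\xi_{\ell}\in\ker T_{\Psi}$ we conclude $\dim\ker T_{\Psi}\ge d\ge n$. Invoking Theorem~\ref{FredholmThm}, $T_{\Psi}$ is Fredholm with $\ind T_{\Psi}=\dim\ker T_{\Psi}$, so in fact $\ind T_{\Psi}\ge n$ as well, and $\dim\ker T_{\Psi}$ is a genuine finite integer that is $\ge n$.

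There is no genuinely hard step here: the statement is soft once Theorem~\ref{PTmain}, formula $(\ref{sNumbersAndT})$, and Theorem~\ref{FredholmThm} are in hand. The only point requiring any care is the final, essentially linear-algebraic, passage from ``the $\xi_{j}(\zeta)$ span $\C^{n}$ for a.e.\ $\zeta$'' to ``$n$ of the $\xi_{j}$ are linearly independent in $H^{2}(\C^{n})$,'' carried out above; note that the hypotheses of Theorem~\ref{FredholmThm} are exactly what is needed to legitimately apply Theorem~\ref{PTmain} with the specific threshold $\sigma=t_{n-1}(\Psi)$, which is why the corollary can be read off from that theorem.
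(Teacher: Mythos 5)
Your proof is correct, but it follows a different route from the paper's. The paper argues via the operator $T_{z\Psi}$: by Theorem \ref{FredholmThm} the operator $T_{\Psi}$ (hence $T_{z\Psi}$) is Fredholm, and the dense-range property of $T_{z\Psi}$ recalled in section \ref{verybadSection} then upgrades to surjectivity; pulling back each constant $c\in\C^{n}$ to some $f$ with $T_{z\Psi}f=c$ and using $T_{\Psi}f=\pP_{+}\bar{z}\pP_{+}(z\Psi)f=0$ produces $n$ independent elements of $\ker T_{\Psi}$. You instead invoke the Peller--Treil characterization (Theorem \ref{PTmain}) at the threshold $\sigma=t_{n-1}(\Psi)>0$, where $(\ref{sNumbersAndT})$ forces $\fS_{\Psi}^{\sigma}(\zeta)=\C^{n}$ a.e., and then pass from pointwise spanning to linear independence in $H^{2}(\C^{n})$; that last linear-algebra step is carried out correctly. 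A notable feature of your argument is that the lower bound $\dim\ker T_{\Psi}\geq n$ does not actually use Theorem \ref{FredholmThm} at all --- it needs only admissibility, very bad approximability, and the nonvanishing of $t_{n-1}(\Psi)$ --- whereas the paper's proof genuinely needs the closed range supplied by that theorem; the trade-off is that you lean on the deeper structure theorem \ref{PTmain}, while the paper uses only the more elementary dense-range fact together with Fredholmness.
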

\begin{proof}
			By Theorem \ref{FredholmThm}, $T_{\Psi}$ is Fredholm and so $T_{z\Psi}$ is Fredholm as well.  
			Moreover, the assumptions on $\Psi$ imply that the Toeplitz operator $T_{z\Psi}$ has dense 
			range (see section \ref{verybadSection}) and thus $\Range T_{z\Psi}=H^{2}(\C^{n})$.  It follows 
			that, for each $c\in\C^{n}$, there is an $f\in H^{2}(\C^{n})$ such that $T_{z\Psi}f=c$ and so 
			$T_{\Psi}f=\pP_{+}\bar{z}\pP_{+}(z\Psi)f=0$, i.e. $f\in\ker T_{\Psi}$.  Hence $\dim\ker T_{\Psi}\geq n$.
\end{proof}

\begin{cor}\label{preIndex}
			Suppose $\Phi$ is $k$-admissible and $Q$ is the superoptimal approximant in $\Hk(\M_{n})$ of $\Phi$.  
			If the number of non-zero superoptimal singular values of degree $k$ of $\Phi$ equals $n$, then the 
			Toeplitz operator $T_{\Phi-Q}$ is Fredholm and $\ind T_{\Phi-Q}=\dim\ker T_{\Phi-Q}>0$.
\end{cor}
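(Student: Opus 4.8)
The plan is to reduce Corollary \ref{preIndex} to the already-proved Theorem \ref{FredholmThm} by exploiting the reduction, recorded at the end of section \ref{verybadSection}, from the $k$-degree problem for $\Phi$ to the $0$-degree problem for the error term. First I would set $\Psi \Mydef \Phi - Q$. Since $\Phi$ is $k$-admissible with superoptimal approximant $Q$ in $\Hk(\M_{n})$, the remarks following Theorem \ref{PTmain} show that $\Psi$ is very badly approximable and $t_{j}(\Psi) = t_{j}^{(k)}(\Phi)$ for all $j \geq 0$; in particular $\Psi$ has exactly $n$ non-zero superoptimal singular values of degree $0$. It remains to verify that $\Psi$ is admissible as a matrix-valued function in its own right, i.e. that $\|H_{\Psi}\|_{\rm e}$ is strictly less than the smallest non-zero $t_{j}(\Psi)$. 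This is where I would invoke the identity $\|H_{\Psi}\|_{\rm e} = \|H_{\Phi-Q}\|_{\rm e} = \|H_{\Phi}\|_{\rm e}$: the second equality is the formula $\|H_{\Psi}\|_{\rm e} = \dist_{L^{\infty}(\M_{n})}(\Psi, (H^{\infty}+C)(\M_{n}))$ combined with $Q \in \Hk(\M_{n}) \subseteq (H^{\infty}+C)(\M_{n})$ (a meromorphic function with finitely many poles in $\D$, bounded near $\T$, differs from an $H^{\infty}$ function by a rational function with poles in $\D$, hence lies in $H^{\infty}+C$). Since $\Phi$ is $k$-admissible, $\|H_{\Phi}\|_{\rm e}$ is strictly less than the smallest non-zero element of $\{t_{j}^{(k)}(\Phi)\}_{j\geq 0} = \{t_{j}(\Psi)\}_{j\geq 0}$, which is exactly the admissibility of $\Psi$.

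Once $\Psi = \Phi - Q$ is seen to be an admissible very badly approximable $n \times n$ matrix-valued function with $n$ non-zero superoptimal singular values of degree $0$, Theorem \ref{FredholmThm} applies verbatim and yields that $T_{\Psi} = T_{\Phi-Q}$ is Fredholm with $\ind T_{\Phi-Q} = \dim \ker T_{\Phi-Q} > 0$, which is the assertion of the corollary.

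The only real obstacle is the bookkeeping in the preceding paragraph: one must be careful that $Q \in \Hk(\M_{n})$ genuinely belongs to $(H^{\infty}+C)(\M_{n})$ so that the essential-norm (equivalently, distance-to-$H^{\infty}+C$) identity gives $\|H_{\Phi-Q}\|_{\rm e} = \|H_{\Phi}\|_{\rm e}$, and that the superoptimal singular values of $\Psi$ of degree $0$ really coincide with those of $\Phi$ of degree $k$ — but both facts are recorded in section \ref{verybadSection} (the $t_{j}(\Phi - Q) = t_{j}^{(k)}(\Phi)$ identity) and in section \ref{bestRemarks} (the formula for $\|H_{\Psi}\|_{\rm e}$), so no new argument is needed. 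I would therefore present the proof as a short three-step chain: (i) reduce to $\Psi = \Phi - Q$, noting it is very badly approximable with the right superoptimal singular values; (ii) check admissibility of $\Psi$ via $\|H_{\Phi-Q}\|_{\rm e} = \|H_{\Phi}\|_{\rm e}$ and the $k$-admissibility hypothesis on $\Phi$; (iii) apply Theorem \ref{FredholmThm}.
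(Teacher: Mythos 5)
Your proposal is correct and follows the same route as the paper: reduce to $\Psi=\Phi-Q$, note via the remarks in section \ref{verybadSection} that $\Psi$ is very badly approximable with $t_{j}(\Psi)=t_{j}^{(k)}(\Phi)$, and apply Theorem \ref{FredholmThm}. The admissibility check you spell out (via $\|H_{\Phi-Q}\|_{\rm e}=\|H_{\Phi}\|_{\rm e}$, which the paper records in $(\ref{HGfacts})$) is exactly the justification the paper leaves implicit, so no new ideas are needed.
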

\begin{proof}
			As observed in section \ref{verybadSection}, the matrix-valued function $\Psi=\Phi-Q$ is very 
			badly approximable and $t_{j}(\Psi)=t_{j}^{(k)}(\Phi)$ for all $j\geq 0$.  In particular, 
			$\Psi$ is admissible and has $n$ non-zero superoptimal singular values.  Hence, the conclusion 
			follows from Theorem \ref{FredholmThm}.
\end{proof}

\section{$k$-Admissible weights for Hankel operators}\label{weightSection}

\begin{definition}
			Given a Hankel operator $H_{\Psi}: H^{2}(\C^{n})\rightarrow H^{2}_{-}(\C^{m})$ and a matrix-valued 
			weight $W$,	we say that $W$ is a \emph{$k$-admissible weight for $H_{\Psi}$} if the inequality
			\[	\|H_{\Psi} f\|_{2}\leq\|f\|_{W}	\]
			holds for all $f$ which belong to an invariant subspace of $H^{2}(\C^{n})$ under multiplication by 
			$z$ whose codimension is at most $k$.  
\end{definition}
Equivalently, $W$ is a $k$-admissible weight if the operator $T_{W}-H_{\Psi}^{*}H_{\Psi}$ is non-negative
on an invariant subspace of $H^{2}(\C^{n})$ under multiplication by $z$ whose codimension is at most $k$.  
(Note that our definition of $k$-admissibility for weights is stated differently than in \cite{T2} yet it 
is equivalent.)

The reason for studying $k$-admissible weights for Hankel operators arises naturally from the problem of 
best approximation in $\Hk(\M_{m,n})$.  Indeed for $\Phi\in L^{\infty}(\M_{m,n})$, it is not difficult to 
see (by Nehari's theorem, i.e. equation $(\ref{skTreil})$ with $k=0$) that finding a best approximant $Q$ 
in $\Hk(\M_{m,n})$ to $\Phi$ is equivalent to finding a non-trivial (closed) invariant subspace $\cM$ of 
$H^{2}(\C^{n})$ under multiplication by $z$ with codimension at most $k$ such that $\|H_{\Phi}f\|_{2}\leq 
s_{k}(H_{\Phi})\|f\|_{2}$ for all $f\in\cM$.  That is, the best approximation problem in $\Hk(\M_{m,n})$ 
is equivalent to verifying that the matrix-valued weight $W=s_{k}^{2}(H_{\Phi})I$ is $k$-admissible for 
the Hankel operator $H_{\Phi}$, where $I$ denotes the identity on $H^{2}(\C^{n})$.

The collection of $k$-admissible weights for Hankel operators was characterized by Treil.

\begin{fact}[\cite{T2}]\label{AAKwithW}
			Suppose $\Phi\in L^{\infty}(\M_{m,n})$.  A matrix-valued weight $W$ is $k$-admissible for $H_\Phi$ 
			if and only if there is a $Q\in \Hk(\M_{m,n})$ such that \[	(\Phi-Q)^{*}(\Phi-Q)\leq W.	\]
\end{fact}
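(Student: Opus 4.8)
The plan is to prove the two implications separately, with the ``only if'' direction being the substantial one. For the easy direction, suppose there is $Q\in\Hk(\M_{m,n})$ with $(\Phi-Q)^{*}(\Phi-Q)\le W$. Let $B$ be a finite Blaschke--Potapov product of degree at most $k$ with $QB\in H^{\infty}(\M_{m,n})$, and set $\cM=BH^{2}(\C^{n})$, an invariant subspace under multiplication by $z$ of codimension at most $k$. For $f\in\cM$ write $f=Bg$ with $g\in H^{2}(\C^{n})$; then $(\Phi-Q)f=(\Phi B-QB)g$ and, since $\Phi B-QB$ differs from $\Phi B$ by an $H^{\infty}$ function and $B$ is unitary-valued, $\|H_{\Phi}f\|_{2}=\|H_{\Phi B}g\|_{2}=\|\pP_{-}(\Phi B-QB)g\|_{2}\le\|(\Phi B-QB)g\|_{2}=\|(\Phi-Q)f\|_{2}\le\|f\|_{W}$, where the last step is exactly the hypothesis $(\Phi-Q)^{*}(\Phi-Q)\le W$ integrated over $\T$. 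Hence $W$ is $k$-admissible for $H_{\Phi}$.

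For the converse, assume $W$ is $k$-admissible, so there is an invariant subspace $\cM$ of codimension $r\le k$ with $\|H_{\Phi}f\|_{2}\le\|f\|_{W}$ for all $f\in\cM$. Write $\cM=BH^{2}(\C^{n})$ for a Blaschke--Potapov product $B$ of degree $r$ (using the structure theorem for finite-codimension invariant subspaces cited in section \ref{bestRemarks}). The inequality for $f=Bg$ becomes $\|H_{\Phi B}g\|_{2}\le\|Bg\|_{W}=\|g\|_{B^{*}WB}$ for all $g\in H^{2}(\C^{n})$, i.e.\ the weight $W_{B}\Mydef B^{*}WB$ is $0$-admissible for the Hankel operator $H_{\Phi B}$. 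So the problem reduces to the case $k=0$: if we can produce $Q_{0}\in H^{\infty}(\M_{m,n})$ with $(\Phi B-Q_{0})^{*}(\Phi B-Q_{0})\le W_{B}=B^{*}WB$, then multiplying on the left and right by $B$ and $B^{*}$ and setting $Q=Q_{0}B^{*}$ (which lies in $\Hk(\M_{m,n})$ since $QB=Q_{0}\in H^{\infty}$) gives $(\Phi-Q)^{*}(\Phi-Q)=B(\Phi B-Q_{0})^{*}(\Phi B-Q_{0})B^{*}\le B\,B^{*}WB\,B^{*}=W$ on $\T$, as required.

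It thus remains to prove the $k=0$ case: if $\|H_{\Psi}f\|_{2}\le\|f\|_{W}$ for all $f\in H^{2}(\C^{n})$, then there is $Q_{0}\in H^{\infty}(\M_{m,n})$ with $(\Psi-Q_{0})^{*}(\Psi-Q_{0})\le W$ a.e.\ on $\T$. This is the heart of the matter and is a weighted Adamyan--Arov--Krein / Nehari-type statement. The approach I would take follows Treil's method (and the ideas of \cite{T2}): assuming $W\ge\varepsilon^{2}I$ for some $\varepsilon>0$ (the general case is recovered by approximating $W$ from below by $W+\varepsilon^{2}I$ and taking a weak-$*$ limit of the resulting $Q_{0}^{(\varepsilon)}$, using a normal-families argument to control the limit), one passes to the Hilbert space $L^{2}(W)$ with inner product $(\cdot,\cdot)_{W}$. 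The hypothesis says $H_{\Psi}$ is a contraction from $H^{2}(W)$ into $H^{2}_{-}(\C^{m})$ (with the ordinary norm on the target); one extends it by the commutant-lifting / Nehari mechanism in the weighted setting to a symbol $G\in L^{\infty}$ with $\pP_{-}G=\pP_{-}\Psi$ and the operator of multiplication by $G$ from $L^{2}(W)$ to $L^{2}(\C^{m})$ still a contraction, which is precisely the pointwise inequality $G^{*}G\le W$ a.e.; then $Q_{0}=\Psi-G\in H^{\infty}(\M_{m,n})$. The main obstacle is carrying out this lifting in the matrix-weighted context while keeping the resulting symbol bounded pointwise by $W$ rather than merely in some averaged sense; this is exactly where the invertibility/admissibility of $W$ (giving uniform two-sided bounds $\varepsilon^{2}I\le W(\zeta)\le\|W\|_{\infty}I$ after the reduction) is used, so that $W^{1/2}$ and $W^{-1/2}$ are bounded multipliers and one can conjugate by $W^{1/2}$ to reduce to an unweighted Nehari extension for the operator $H_{\Psi}W^{-1/2}$, then conjugate back.
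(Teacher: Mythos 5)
Your proposal is correct and follows essentially the same route the paper indicates: the paper gives no detailed proof but states that the result "can be deduced from the case (of the Weighted Nehari Problem) $k=0$ \dots due to the characterization of the invariant subspaces of multiplication by $z$ on $H^{2}(\C^{n})$ with finite codimension," and your reduction via $\cM=BH^{2}(\C^{n})$, the conjugated weight $B^{*}WB$, and $Q=Q_{0}B^{*}$ is exactly that deduction, with the $k=0$ weighted Nehari step treated as the known input (Theorem 6.1 in \cite{Pe1}) just as the paper does.
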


The proof is contained in that of Theorem 16.1 in \cite{T2}.  Actually, one does not need to use spectral 
measures nor the Iokhvidov-Ky Fan theorem; the version above can be deduced from the case (of the Weighted 
Nehari Problem) $k=0$ (see Theorem 6.1 in \cite{Pe1}) due to the characterization of the invariant subspaces 
of multiplication by $z$ on $H^{2}(\C^{n})$ with finite codimension stated in section \ref{bestRemarks}.  

Given $\Phi\in L^{\infty}(\M_{m,n})$, an $n\times n$ matrix-valued weight $W$, and a subspace $\cL$ of 
$H^{2}(\C^{n})$, we define
\begin{equation}\label{EwDef}
			\cE_{W}(\Phi;\cL)\Mydef\{ \xi\in\cL: \|H_{\Phi}\xi\|_{2}=\|\xi\|_{W}\}.
\end{equation}

A few observations are in order.  Let $\Phi\in L^{\infty}(\M_{m,n})$ satisfy $(\ref{bestAssumption})$ and 
$Q$ be a best approximant in $\Hk(M_{m,n})$ to $\Phi$.  If $W\Mydef(\Phi-Q)^{*}(\Phi-Q)$,  then
\begin{enumerate}
			\item	the operator $T_{W}-H^{*}_{\Phi}H_{\Phi}$ is non-negative when restricted to $\ker H_{Q}$, i.e. 
						$W$ is a $k$-admissible weight for $H_{\Phi}$,
			\item	the kernel of $(T_{W}-H^{*}_{\Phi}H_{\Phi})|\ker H_{Q}$ coincides with $\cE_{W}(\Phi;\ker H_{Q})$,
			\item	the space of maximizing vectors of the operator 
						\begin{equation}\label{HankelWithWeight}
									H_{\Phi}:(\ker H_{Q},\|\cdot\|_{W})\rightarrow (H^{2}_{-}(\C^{n}),\|\cdot\|_{2})
						\end{equation}
						equals $\cE_{W}(\Phi;\ker H_{Q})$, and
			\item	$H_{\Phi}\xi=(\Phi-Q)\xi$ and $H_{\Phi}\xi\in J\ker H_{Q^{t}}$ hold for $\xi\in
						\cE_{W}(\Phi;\ker H_{Q})$.
\end{enumerate}
\begin{remark}
			It is easy to see that $\cE_{W}(\Phi;\ker H_{Q})$ contains the non-empty finite dimensional subspace
			$\Ek(\Phi)$ (e.g. see equation $(\ref{maxVector})$) however these sets need not be equal in general.  
			On the other hand, if $W$ equals the identity a.e. on $\T$, then $\Ek(\Phi)=\cE_{W}(\Phi;\ker H_{Q})$ 
			holds (by Lemma \ref{schmidtEW} below).  
\end{remark}

We now provide a suitable extension of Theorem \ref{PTmain} to the class of $k$-admissible functions.

\begin{thm}\label{CNTheorem}
			Suppose $\Phi$ is $k$-admissible and $Q$ is the superoptimal approximant in $\Hk(\M_{n})$ of $\Phi$.  
			If the number of non-zero superoptimal singular values of degree $k$ of $\Phi$ equals $n$, then there 
			are functions $\xi_{1},\ldots, \xi_{\ell}\in\cE_{W}(\Phi;\ker H_{Q})$, where $W=(\Phi-Q)^{*}(\Phi-Q)$, 
			such that
			\begin{equation}\label{CNconclusion}
						\fS_{\Phi-Q}^{\sigma}(\zeta)=\mySpan\{\,\xi_{j}(\zeta): 1\leq j\le \ell\,\}\;\text{ for a.e. }
						\zeta\in\T.
			\end{equation}
\end{thm}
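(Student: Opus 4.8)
The plan is to reduce Theorem~\ref{CNTheorem} to the already-known Theorem~\ref{PTmain} by passing from the Hankel operator $H_{\Phi}$ acting on the weighted space $(\ker H_{Q},\|\cdot\|_W)$ to an \emph{ordinary} Hankel operator on an \emph{unweighted} $H^2$-space, in such a way that very-bad-approximability is preserved and the subspaces $\fS^{\sigma}$ are matched up. Concretely, write $\Psi\Mydef\Phi-Q$ and $W=\Psi^{*}\Psi$; since $\Psi$ is very badly approximable with $n$ non-zero superoptimal singular values, $W$ is invertible a.e.\ on $\T$ with $\|W(\zeta)^{-1}\|=t_{n-1}^{-2}$, and $W^{1/2}$ is a bounded invertible matrix-valued function. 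First I would let $B$ be the finite Blaschke-Potapov product of degree $k$ with $\ker H_Q=BH^2(\C^n)$, so that $\ker H_Q$ is unitarily equivalent (via $f\mapsto B^{-1}f=\bar B^{t}f$, a unitary multiplication on $L^2$) to $H^2(\C^n)$. Pulling the weight through this, the operator in \eqref{HankelWithWeight} becomes an ordinary Hankel operator $H_{\widetilde\Psi}$ with symbol $\widetilde\Psi\Mydef \Psi B V$ for an appropriate unimodular adjustment $V$ coming from the weight $W$, acting on the unweighted space $H^2(\C^n)$; the key numerical input is \eqref{skIdentity} and \eqref{HGfacts}, which give $\|H_{\widetilde\Psi}\|=\|H_{\Psi}\|$ and $\|H_{\widetilde\Psi}\|_{\rm e}=\|H_{\Psi}\|_{\rm e}$, so $\widetilde\Psi$ is again admissible, and one checks $\|\widetilde\Psi(\zeta)\|=\|\Psi(\zeta)\|=t_0$ a.e., i.e.\ $\widetilde\Psi$ is badly approximable.

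The heart of the argument is then to verify that $\widetilde\Psi$ is in fact \emph{very} badly approximable and to track the singular-value subspaces. For the former I would invoke the weighted analog of the very-bad-approximability criterion: by Fact~\ref{AAKwithW}, $W$ being a $k$-admissible weight for $H_\Phi$ already encodes that $\Zero$ is the best $\Hk$-approximant in the weighted sense; the refined (superoptimal) minimality of $Q$ translates, via the standard reduction of section~\ref{verybadSection} and the diagonalization machinery of Peller--Young used in \cite{T2}, into $\Zero$ being the superoptimal approximant of $\widetilde\Psi$ in $H^\infty(\M_n)$, which is exactly very-bad-approximability. For the subspace bookkeeping: because $W^{1/2}$ is a pointwise invertible matrix multiplier, the Schmidt vectors of $\widetilde\Psi(\zeta)$ corresponding to singular values $\geq\sigma$ are carried \emph{by the pointwise invertible change of variables} to a spanning set of $\fS^{\sigma}_{\Psi}(\zeta)$ up to the identification $\ker H_Q\cong H^2(\C^n)$ — here I would use that, since $\Psi=W^{1/2}$-isometrically related things match, the singular values of $\widetilde\Psi(\zeta)$ are exactly $1$ on the relevant range (as $\|H_{\widetilde\Psi}\|=1$ after normalizing by $\|\Psi\|_\infty$), so the statement becomes: for each $\sigma$, $\fS^{\sigma}_{\widetilde\Psi}(\zeta)$ is spanned a.e.\ by finitely many functions in $\ker T_{\widetilde\Psi}$.

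Applying Theorem~\ref{PTmain} to the admissible very badly approximable function $\widetilde\Psi$ yields, for each $\sigma>0$, functions $\eta_1,\dots,\eta_\ell\in\ker T_{\widetilde\Psi}$ with $\fS^{\sigma}_{\widetilde\Psi}(\zeta)=\mySpan\{\eta_j(\zeta)\}$ a.e. Transporting the $\eta_j$ back through the (bounded, pointwise invertible) change of variables gives functions $\xi_j\in\ker H_Q$, and I would then check $\xi_j\in\cE_W(\Phi;\ker H_Q)$: this is precisely the assertion that the maximizing vectors of \eqref{HankelWithWeight} correspond to $\ker T_{\widetilde\Psi}$, which follows from observation~(3) preceding the theorem together with the identity $\ker T_{\widetilde\Psi}=\{\,f:\|H_{\widetilde\Psi}f\|_2=\|\widetilde\Psi f\|_2\,\}$ (the same computation as in the proof of Theorem~\ref{FredholmThm}). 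Finally, since $W^{1/2}(\zeta)$ is invertible, $\mySpan\{\xi_j(\zeta)\}=\mySpan\{\eta_j(\zeta)\}$ pulled back equals $\fS^{\sigma}_{\Phi-Q}(\zeta)$ a.e., which is \eqref{CNconclusion}.

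The main obstacle I anticipate is making the change of variables $H_{\Psi}$ on $(\ker H_Q,\|\cdot\|_W)\leftrightarrow H_{\widetilde\Psi}$ on $(H^2(\C^n),\|\cdot\|_2)$ genuinely rigorous at the level of \emph{symbols} rather than operators: one must produce an honest bounded matrix-valued $\widetilde\Psi$ (not merely an abstract operator) whose Hankel operator realizes the weighted operator, and whose Toeplitz kernel and pointwise singular-value subspaces are correctly related to those of $\Psi$ under $W^{1/2}$ and $B$. This requires showing that the factor $B$ contributes nothing to $\fS^{\sigma}$ pointwise (it takes unitary values, so it is harmless a.e.) and that $W^{-1/2}\Psi^{*}\Psi W^{-1/2}$ behaves like a projection on the top range — essentially re-deriving, in the weighted setting, the structural facts about $\Psi=\Phi-Q$ that in the unweighted case come for free from \eqref{sNumbersAndT}. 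I expect this to parallel closely the proof of Theorem~\ref{FredholmThm} above, where the same weight $W=\Psi^{*}\Psi$ and the same operator $H_{\Psi}W^{-1/2}$ already appeared; the new ingredient is only the pointwise (a.e.-$\zeta$) spanning conclusion, which is where Theorem~\ref{PTmain} does the real work.
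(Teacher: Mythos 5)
Your strategy is to reduce the theorem to Theorem~\ref{PTmain} by transforming $H_{\Phi}$ on $(\ker H_Q,\|\cdot\|_W)$ into an ordinary Hankel operator $H_{\widetilde\Psi}$ and then applying the Peller--Treil criterion. The fatal point is the step you describe as ``the refined (superoptimal) minimality of $Q$ translates \dots into $\Zero$ being the superoptimal approximant of $\widetilde\Psi$'': you assert it with a reference to ``diagonalization machinery'' but give no argument, and by Theorem~\ref{PTmain} (an equivalence) the very bad approximability of the transformed symbol \emph{is} the conclusion you are trying to prove, once one knows $\cE_W(\Phi;\ker H_Q)=B\ker T_{(\Phi-Q)B}$ and $B(\zeta)\fS^{\sigma}_{(\Phi-Q)B}(\zeta)=\fS^{\sigma}_{\Phi-Q}(\zeta)$. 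As written the reduction is therefore circular at exactly the point where superoptimality of $Q$ must be used. A second, independent problem is the ``unimodular adjustment $V$ coming from the weight'': absorbing $W^{-1/2}$ into the symbol flattens the pointwise singular values of $\widetilde\Psi$ to $1$ on the relevant range (as you yourself note), so $\fS^{\sigma}_{\widetilde\Psi}(\zeta)$ degenerates to $\C^{n}$ or $\{0\}$ and Theorem~\ref{PTmain} applied to $\widetilde\Psi$ can no longer see the stratification by the distinct superoptimal singular values $\sigma_0>\dots>\sigma_r$, which is the whole content of \eqref{CNconclusion}; moreover $W^{-1/2}$ is not analytic, so $H_{\Psi}W^{-1/2}$ on $W^{1/2}\ker H_Q$ is not literally a Hankel operator without an outer spectral factorization of $W$, which you do not supply.

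The idea can be salvaged, but only by discarding the weight entirely: since $\|\xi\|_W=\|(\Phi-Q)\xi\|_2$, one has $\cE_W(\Phi;\ker H_Q)=B\ker T_{(\Phi-Q)B}$ with no normalization needed, and the missing key step is the elementary correspondence $F\mapsto Q+FB^{*}$, which maps $H^{\infty}(\M_n)$ into $\Hk(\M_n)$ (because $(Q+FB^{*})B=QB+F\in H^{\infty}$) and preserves the pointwise singular values of the error since $B$ is unitary-valued; this shows $\Zero$ is the superoptimal approximant of $(\Phi-Q)B$ in $H^{\infty}(\M_n)$, after which Theorem~\ref{PTmain} does apply and yields the theorem. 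Be aware that this is a genuinely different route from the paper's: the paper does not invoke Theorem~\ref{PTmain} at all here (see Remark~\ref{impRemark}), but argues directly with the truncated weights $W_j=\Lambda_j(W)$, the perturbed family $W^{[a]}$, Lemma~\ref{PT2Lemma}, and Treil's Theorem~\ref{AAKwithW}, deriving a contradiction with the superoptimality of $Q$; indeed the paper deduces the very bad approximability of $\Lambda^{t}(\Phi-Q)B$ as a \emph{consequence} of Theorem~\ref{CNTheorem} in Theorem~\ref{mainObserThm}, which is the opposite logical order from the one you propose.
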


\begin{remark}\label{impRemark}
			The conclusion of Theorem \ref{CNTheorem} is \emph{not} an immediate consequence of Theorem \ref{PTmain}. 
			Indeed, the very bad approximability of $\Phi-Q$ \emph{only} implies that there are finitely many 
			functions $\xi_{1},\ldots, \xi_{\ell}\in \ker T_{\Phi-Q}$ such that $(\ref{CNconclusion})$ holds; 
			however, the conclusion of Theorem \ref{CNTheorem} states that one can choose these functions in 
			$\cE_{W}(\Phi;\ker H_{Q})$.  In fact, $\cE_{W}(\Phi;\ker H_{Q})=\ker H_{Q}\cap \ker T_{\Phi-Q}$ 
			is a subset of $\ker T_{\Phi-Q}$, but these sets are not equal when $k>0$.
\end{remark}

The proof of Theorem \ref{CNTheorem} follows from ideas in the ``matrix-weight proof'' of Theorem 4.1 in 
\cite{PT2}.  Some of these arguments were used earlier in \cite{T2} in a different context.

\begin{proof}[Proof of Theorem \ref{CNTheorem}]
			Let $\cL\Mydef\ker H_{Q}$.  The matrix-valued weight $W$ is a $k$-admissible weight for the Hankel 
			operator $H_{\Phi}$ on $\cL$, as observed above.  Moreover, in view of the choice of the superoptimal 
			approximant $Q$ of $\Phi$, $\Phi-Q$ is very badly approximable and $(\ref{sNumbersAndT})$ holds.

			Let $\sigma_{0},\ldots,\sigma_{r}$ denote all of the \emph{distinct} superoptimal singular values 
			of degree $k$ of $\Phi$ arranged in decreasing order.
			
			For $0\leq j\leq r$, define the functions $\Lambda_{j}(x)=\max\{x,\sigma_{j}^{2}\}$ for $x\geq 0$
			and the weights $W_{j}(\zeta)=\Lambda_{j}((\Phi-Q)^{*}(\zeta)(\Phi-Q)(\zeta))$ for $\zeta\in\T$.
			
			Notice that $W(\zeta)\leq W_{j}(\zeta)$ for a.e. $\zeta\in\T$ and so the weight $W_{j}$ is 
			$k$-admissible for the Hankel operator $H_{\Phi}$.  Let $\cE_{j}\Mydef\cE_{W_{j}}(\Phi;\cL)$ for 
			$0\leq j\leq r$.  
			Then $\Ek(\Phi)\subseteq\cE_{0}$ and $\cE_{0}\subset\ldots\subset\cE_{r}=\cE_{W}(\Phi;\cL)$.  Moreover, 
			for each $0\leq j\leq r$, $W_{j}(\zeta)$ is invertible a.e. on $\T$ and $\|W_{j}(\zeta)^{-1/2}\|=
			\sigma_{j}$ for a.e. $\zeta\in\T$ because
			\begin{equation}\label{normWInv}
						\|W_{j}(\zeta)^{-1}\|=s_{n-1}^{-1}(W_{j}(\zeta))=\sigma_{j}^{-2}\;\text{ for a.e. }\zeta\in\T.
			\end{equation}
			
			Fix $0\leq j\leq r$.  By considering the operator $H_{\Phi}W_{j}^{-1/2}$ on $W_{j}^{1/2}\cL$ (equipped 
			with the $L^{2}$-norm), we see that 
			\[	\|H_{\Phi}W_{j}^{-1/2}|{W_{j}^{1/2}\cL}\|=\|H_{\Phi}:(\cL,\|\cdot\|_{W_{j}})
					\rightarrow (H^{2}_{-}(\C^{n}),\|\cdot\|_{2})\|=1	\]
			and
			\[	\|H_{\Phi}W_{j}^{-1/2}|{W_{j}^{1/2}\cL}\|_{\rm e}\leq\|H_{\Phi}\|_{\rm e}\|W_{j}^{-1/2}\|_{\infty}
					\leq\|H_{\Phi}\|_{\rm e}\sigma_{j}^{-1}<1.	\]
			Therefore the space of maximizing vectors $\cE_{j}$, as defined in $(\ref{EwDef})$, of the operator
			$H_{\Phi}W_{j}^{-1/2}|{W_{j}^{1/2}\cL}$ is non-empty and finite dimensional.  Let
			\[	q_{j}\Mydef\|H_{\Phi}|\cL\ominus_{W_{j}}\cE_{j}\|=\sup\{\,\|H_{\Phi}\xi\|_{2}: \xi\in\cL\ominus_{W_{j}}
					\cE_{j}, \|\xi\|_{W_{j}}=1\,\},	\] 
			where
			\[	\cL\ominus_{W_{j}}\cE_{j}\Mydef\{\, g\in\cL: (f,g)_{W_{j}}=0\text{ for all }f\in\cE_{j}\,\}. 	\] 
			Clearly, $q_j\leq 1$.  If $q_{j}=1$, then the restriction to $\cL\ominus_{W_{j}}\cE_{j}$ of the 
			operator in $(\ref{HankelWithWeight})$ must also have a maximizing vector by Lemma \ref{PT2Lemma},			
			a contradiction as $\cE_{j}$ contains all maximizing vectors.  Hence, it must be that $q_{j}<1$.
						
			For $\zeta\in\T$, let $E_{j}(\zeta)\Mydef\mySpan\{\,f(\zeta): f\in\cE_{j}\,\}$.  It is easy to verify 
			that $\dim E_{j}(\zeta)$ equals a constant for a.e. $\zeta\in\T$ (e.g. see Chapter VII in \cite{He}). 
			Therefore, it suffices to show that $E_{j}(\zeta)=\fS_{\Phi-Q}^{\sigma_{j}}(\zeta)$ for a.e. $\zeta\in\T$.
						
			Assume that there is a function $f\in\cE_{j}$ such that $f(\zeta)\notin\fS_{\Phi-Q}^{\sigma_{j}}(\zeta)$ 						
			on a set of positive measure.  Since $\dim \fS_{\Phi-Q}^{\sigma_{j}}(\zeta)$ equals a constant a.e. on 
			$\T$, it follows that $f(\zeta)\notin\fS_{\Phi-Q}^{\sigma_{j}}(\zeta)$ holds for a.e. $\zeta\in\T$.  
			However, this implies that $\|(\Phi-Q)(\zeta)f(\zeta)\|_{\C^{n}}<\sigma_{j}\|f(\zeta)\|_{\C^{n}}$ and so
			\[	\|H_{\Phi}f\|_{2}\leq\|(\Phi-Q)f\|_{2}<\sigma_{j}\|f\|_{2}\leq\|f\|_{W_{j}},	\]
			because $f\in\cL$, a contradiction to the assumption $f\in\cE_{j}$.  Thus, $E_{j}(\zeta)
			\subseteq\fS_{\Phi-Q}^{\sigma_{j}}(\zeta)$ for a.e. $\zeta\in\T$.  
			
			Assume now for the sake of contradiction that the subspace-valued function $E_{j}(\zeta)$ is a proper 
			subspace of $\fS_{\Phi-Q}^{\sigma_{j}}(\zeta)$ on a set of positive measure.  In this case, it must be 
			that $E_{j}(\zeta)$ is a proper subspace of $\fS_{\Phi-Q}^{\sigma_{j}}(\zeta)$ for \emph{a.e.}
			$\zeta\in\T$ because $\dim \fS_{\Phi-Q}^{\sigma_{j}}(\zeta)$ and $\dim E_{j}(\zeta)$ are constant 
			for a.e. $\zeta\in\T$.
			
			Consider now the one-parameter family of weights $W^{[a]}$, $a>0$, defined by
			\[	W^{[a]}(\zeta)\Mydef P_{E_{j}(\zeta)}W_{j} P_{E_{j}(\zeta)}+a^{2}P_{E_{j}(\zeta)^{\bot}},	\]
			where $P_{E_{j}(\zeta)}$ and $P_{E_{j}(\zeta)^{\bot}}$ denote the orthogonal projections from $\C^{n}$ 
			onto $E_{j}(\zeta)$ and $E_{j}(\zeta)^{\bot}$, respectively, for a.e. $\zeta\in\T$.
					
			If $f\in\cE_{W_{j}}$ and $g\in\cL\ominus_{W_{j}}\cE_{W_{j}}$, then $H_{\Phi}f\bot H_{\Phi} g$ and so
			\[	\|H_{\Phi}(f+g)\|_{2}^{2}=\|H_{\Phi}f\|_{2}^{2}+\|H_{\Phi}g\|_{2}^{2}=\|f\|_{W_{j}}^{2}
																	+\|H_{\Phi}g\|_{2}^{2}\leq \|f\|_{W_{j}}^{2}+q_{j}^2\|g\|_{W_{j}}^{2}.	\]
			Let $a=\sigma_{j} q_{j}>0$.  It is not difficult to show that
			\[	\|f\|_{W_{j}}=\|f\|_{W^{[a]}}\;\text{ and }\;q_{j}\|g\|_{W_{j}}\leq\|g\|_{W^{[a]}}	\]
			(c.f. the ``matrix-weight proof'' of Theorem 4.1 in \cite{PT2}).  Therefore, 
			\begin{equation}\label{WaAdmissible}
						\|H_{\Phi}(f+g)\|_{2}^{2}\leq \|f\|_{W^{[a]}}^{2}+\|g\|_{W^{[a]}}^{2}=\|f+g\|_{W^{[a]}}^{2}
			\end{equation}
			as the orthogonal complements of $\cE_{W_{j}}$ in $\cL$ with respect to the inner products
			$(\cdot,\cdot)_{W_{j}}$ and $(\cdot,\cdot)_{W^{[a]}}$ are equal.
			 
			The inequality in $(\ref{WaAdmissible})$ implies that the weight $W^{[a]}$ is $k$-admissible for 
			the Hankel operator $H_{\Phi}$ on $\cL$ and so, by Theorem \ref{AAKwithW}, there is a matrix-valued 
			function $Q_{\#}\in\Hk(\M_{n})$ such that 
			\[	(\Phi-Q_{\#})^{*}(\Phi-Q_{\#})\leq W^{[a]}.	\]
			
			Let $N$ be the largest integer such that $s_{N}((\Phi-Q)(\zeta))=\sigma_{j}$ for a.e. $\zeta\in\T$.  
			Then, for any $j<N$,
			\[	s_{j}((\Phi-Q_{\#})(\zeta))\leq s_{j}(W^{[a]}(\zeta))^{1/2}
					\leq s_{j}(W(\zeta))^{1/2}=s_{j}((\Phi-Q)(\zeta))	\]
			and 
			\[	s_{N-1}((\Phi-Q_{\#})(\zeta))\leq s_{N-1}(W^{[a]}(\zeta))^{1/2}=a<\sigma_{j}=s_{N-1}((\Phi-Q)(\zeta))	\]
			hold for a.e. $\zeta\in\T$, contradicting the choice of the \emph{superoptimal} approximant $Q$.  Hence
			$E_{j}(\zeta)=\fS_{\Phi-Q}^{\sigma_{j}}(\zeta)$ for a.e. $\zeta\in\T$, as desired.
\end{proof}

\begin{cor}\label{schmidtEW}
			Let $\Phi$ be $k$-admissible, $Q$ be the superoptimal approximant in $\Hk(\M_{n})$ of $\Phi$, and 
			$W=(\Phi-Q)^{*}(\Phi-Q)$.  If all of the superoptimal singular values of degree $k$ of $\Phi$ are 
			equal, then 
			\begin{equation}\label{EWvsEk}
						\cE_{W}(\Phi;\ker H_{Q})=\Ek(\Phi),
			\end{equation}
			and $E_{*}(\zeta)\Mydef\Myspan \{\, f(\zeta):\, f\in\Ek(\Phi)\, \}$ equals $\C^{n}$ for a.e. $\zeta\in\T$.
\end{cor}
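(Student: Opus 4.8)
The plan is to deduce both conclusions from Theorem \ref{CNTheorem} applied in the special situation where all superoptimal singular values coincide. Suppose all superoptimal singular values of degree $k$ of $\Phi$ equal a single number $\sigma>0$ (they are non-zero by $k$-admissibility, so $\Phi$ has $n$ non-zero superoptimal singular values of degree $k$ and Theorem \ref{CNTheorem} applies with the one distinct value $\sigma_0=\sigma$). First I would observe that, by $(\ref{sNumbersAndT})$, $s_j((\Phi-Q)(\zeta))=\sigma$ for all $0\le j\le n-1$ and a.e. $\zeta\in\T$, so $(\Phi-Q)^*(\zeta)(\Phi-Q)(\zeta)=\sigma^2 I_n$ a.e.; that is, $W=\sigma^2 I$ a.e. on $\T$. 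Consequently $\fS_{\Phi-Q}^{\sigma}(\zeta)=\C^n$ for a.e. $\zeta$, since every singular value of $(\Phi-Q)(\zeta)$ equals $\sigma\ge\sigma$. Theorem \ref{CNTheorem} then produces $\xi_1,\dots,\xi_\ell\in\cE_W(\Phi;\ker H_Q)$ with $\mySpan\{\xi_j(\zeta):1\le j\le\ell\}=\C^n$ for a.e. $\zeta\in\T$.

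The next step is to identify $\cE_W(\Phi;\ker H_Q)$ with $\Ek(\Phi)$ in this case. The inclusion $\Ek(\Phi)\subseteq\cE_W(\Phi;\ker H_Q)$ is already recorded (it follows from $(\ref{maxVector})$ together with $\Ek(\Phi)\subseteq\ker H_Q$). For the reverse inclusion, take $\xi\in\cE_W(\Phi;\ker H_Q)$, so $\xi\in\ker H_Q$ and $\|H_\Phi\xi\|_2=\|\xi\|_W=\sigma\|\xi\|_2=s_k(H_\Phi)\|\xi\|_2$ (using $\|\Phi-Q\|_\infty=s_k(H_\Phi)$ from $(\ref{HGfacts})$, which holds since $Q$ is in particular a best approximant). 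By observation (4) preceding Theorem \ref{CNTheorem}, $H_\Phi\xi=(\Phi-Q)\xi$ and $H_\Phi\xi\in J\ker H_{Q^t}=\ker H_Q^*$. Thus $\xi$ satisfies exactly the three defining conditions in Theorem \ref{SchmidtVectors}: $\xi\in\ker H_Q$, $\|H_{\Phi-Q}\xi\|_2=s_k(H_\Phi)\|\xi\|_2$, and $JH_\Phi\xi\in\ker H_{Q^t}$. Since $\Phi$ is $k$-admissible it satisfies $(\ref{bestAssumption})$, so Theorem \ref{SchmidtVectors} gives $\xi\in\Ek(\Phi)$, establishing $(\ref{EWvsEk})$.

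Finally, combining the two steps: $\xi_1,\dots,\xi_\ell\in\cE_W(\Phi;\ker H_Q)=\Ek(\Phi)$ span $\C^n$ pointwise a.e., so $E_*(\zeta)=\mySpan\{f(\zeta):f\in\Ek(\Phi)\}\supseteq\mySpan\{\xi_j(\zeta):1\le j\le\ell\}=\C^n$ for a.e. $\zeta\in\T$, hence $E_*(\zeta)=\C^n$ a.e. I do not anticipate a serious obstacle here; the only point requiring care is making sure all hypotheses of Theorem \ref{CNTheorem} and Theorem \ref{SchmidtVectors} are in force (non-vanishing of the superoptimal singular values, the chain of inequalities $(\ref{bestAssumption})$, and the fact that the unique superoptimal approximant is in particular a best approximant so that $(\ref{HGfacts})$ and observation (4) apply), all of which are immediate from $k$-admissibility and the equality of the superoptimal singular values.
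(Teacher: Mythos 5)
Your proof is correct and takes essentially the same route as the paper's: identify $W=\sigma^{2}I$ a.e., obtain the equality $\cE_{W}(\Phi;\ker H_{Q})=\Ek(\Phi)$ from Theorem \ref{SchmidtVectors} (you make explicit the verification of the condition $JH_{\Phi}\xi\in\ker H_{Q^{t}}$ via observation (4), which the paper leaves implicit), and deduce the pointwise spanning statement from Theorem \ref{CNTheorem}. No gaps.
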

\begin{proof}
			By assumption, every superoptimal singular value of degree $k$ of $\Phi$ equals $s_{k}(H_{\Phi})$ 
			and so $W(\zeta)=s_{k}(H_{\Phi})I$ a.e. on $\T$ (see $(\ref{sNumbersAndT})$), where $I$ denotes the 
			$n\times n$ identity matrix valued function.  Thus if $\xi\in\cE_{W}(\Phi;\ker H_{Q})$, then $\xi$ 
			is a maximizing vector of $H_{\Phi-Q}$ and so the equality in $(\ref{EWvsEk})$ holds by Theorem 
			\ref{SchmidtVectors}.
			
			Let $\sigma=s_{k}(H_{\Phi})$.  The statement concerning $E_{*}(\zeta)$ is now trivial in view of Theorem 
			\ref{CNTheorem} and $(\ref{EWvsEk})$; after all, $E_{*}(\zeta)=\fS_{\Phi-Q}^{\sigma}(\zeta)=\C^{n}$ 
			for a.e. $\zeta\in\T$.
\end{proof}

\section{The Index formula}\label{indexSection}

Let $\Phi$ be $k$-admissible and $Q$ be the superoptimal approximant in $\Hk(\M_{n})$ of $\Phi$.  Recall 
that from the remarks following Lemma \ref{spectralCons}, there are finite Blaschke-Potapov products $B$ 
and $\Lambda$, of degree $k$, such that 
\[	\ker H_{Q}=B H^{2}(\C^{n})\;\text{ and }\;\ker H_{Q^{t}}=\Lambda H^{2}(\C^{n}).	\]
In addition, the weight $W\Mydef(\Phi-Q)^{*}(\Phi-Q)$ is a $k$-admissible weight for $H_{\Phi}$ because 
the operator $T_{W}-H_{\Phi}^{*}H_{\Phi}$ is non-negative on $\ker H_{Q}$ and $\codim\ker H_{Q}=k$.

A key observation is made in the following theorem.

\begin{thm}\label{mainObserThm}
			Suppose $\Phi$ is $k$-admissible and $Q$ is the superoptimal approximant in $\Hk(\M_{n})$ of $\Phi$.  
			If the number of non-zero superoptimal singular values of degree $k$ of $\Phi$ equals $n$, then the
			matrix-valued function $U\Mydef\Lambda^{t}(\Phi-Q)B$ is very badly approximable and 
			\begin{equation}\label{mainObser}
						\cE_{W}(\Phi;\ker H_{Q})=B\ker T_{U}.
			\end{equation}
			In particular, $\dim \cE_{W}(\Phi;\ker H_{Q})\geq n$.
\end{thm}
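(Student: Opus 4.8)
The plan is to translate the weighted problem on $\ker H_{Q}=BH^{2}(\C^{n})$ into an ordinary (unweighted) Hankel problem for the function $U=\Lambda^{t}(\Phi-Q)B$, and then read off the conclusion from the fact that $U$ turns out to be very badly approximable with the right kernel structure. First I would record the basic change-of-variables identities: since $B$ and $\Lambda$ take unitary values on $\T$, multiplication by $B$ is an isometry of $H^{2}(\C^{n})$ onto $\ker H_{Q}=BH^{2}(\C^{n})$, and for $\xi=Bg$ with $g\in H^{2}(\C^{n})$ one has $\|\xi\|_{W}=\|(\Phi-Q)\xi\|_{2}=\|(\Phi-Q)Bg\|_{2}=\|Ug\|_{2}$ (the last equality because $\Lambda^{t}$ is unitary a.e.). Similarly I would want to show that for $\xi=Bg\in\ker H_{Q}$ the quantity $\|H_{\Phi}\xi\|_{2}$ equals $\|H_{U}g\|_{2}$: here one uses $H_{\Phi}\xi=(\Phi-Q)\xi$ for $\xi\in\ker H_{Q}$ together with observation (4) preceding Theorem~\ref{CNTheorem}, namely $H_{\Phi}\xi\in J\ker H_{Q^{t}}=\ker H_{Q}^{*}=(\Lambda H^{2}(\C^{n}))^{\perp}$, wait --- more carefully, $H_{\Phi}\xi=(\Phi-Q)\xi$ lies in $H^{2}_{-}(\C^{n})$ and $\overline{\Lambda^{t}}$ acts appropriately; the point is that $\pP_{-}$ composed with multiplication by $\Lambda^{t}$ relates $H_{\Phi}|\ker H_{Q}$ to $H_{U}$ via the isometries induced by $B$ and $\Lambda$. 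Granting these identities, the set $\cE_{W}(\Phi;\ker H_{Q})=\{\xi\in\ker H_{Q}:\|H_{\Phi}\xi\|_{2}=\|\xi\|_{W}\}$ corresponds under $\xi=Bg$ exactly to $\{g\in H^{2}(\C^{n}):\|H_{U}g\|_{2}=\|Ug\|_{2}\}=\ker T_{U}$ (using $\|Ug\|_{2}^{2}=\|T_{U}g\|_{2}^{2}+\|H_{U}g\|_{2}^{2}$), which is precisely the asserted identity $(\ref{mainObser})$.

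Next I would establish that $U$ is admissible and very badly approximable. Admissibility: $\|H_{U}\|_{\mathrm e}=\|H_{\Phi}\|_{\mathrm e}$ (the change of variables by unitary-valued $B,\Lambda$ does not affect essential norms, or use the formula $\|H_{\Psi}\|_{\mathrm e}=\dist(\Psi,(H^{\infty}+C))$ together with the fact that multiplying by the smooth-away-from-$\T$ Blaschke--Potapov factors does not change the distance to $H^{\infty}+C$), and the singular values $s_{j}(U(\zeta))=s_{j}((\Phi-Q)(\zeta))=t_{j}^{(k)}(\Phi)$ a.e.\ since $\Lambda^{t}(\zeta)$ and $B(\zeta)$ are unitary; hence $\|H_{U}\|_{\mathrm e}$ is strictly less than the smallest non-zero superoptimal singular value, so $U$ is admissible. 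Very bad approximability: I want to invoke Theorem~\ref{PTmain}, so I need to produce, for each $\sigma>0$, finitely many functions in $\ker T_{U}$ whose pointwise span equals $\fS_{U}^{\sigma}(\zeta)$ a.e. This is exactly where Theorem~\ref{CNTheorem} is used: it gives $\xi_{1},\dots,\xi_{\ell}\in\cE_{W}(\Phi;\ker H_{Q})$ with $\fS_{\Phi-Q}^{\sigma}(\zeta)=\mySpan\{\xi_{j}(\zeta)\}$ a.e.; writing $\xi_{j}=Bg_{j}$ with $g_{j}\in\ker T_{U}$ (by the identity just proved) and noting $\fS_{U}^{\sigma}(\zeta)=(B(\zeta))^{*}\fS_{\Phi-Q}^{\sigma}(\zeta)=\mySpan\{g_{j}(\zeta)\}$ a.e.\ (again because the singular vectors transform by the unitary $B(\zeta)^{*}$ up to the harmless unitary $\Lambda^{t}(\zeta)$ on the left, which does not move $\fS^{\sigma}$ of the adjoint side), we get precisely the hypothesis of Theorem~\ref{PTmain}, so $U$ is very badly approximable.

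Finally, the dimension bound: since $U$ is an admissible very badly approximable $n\times n$ matrix-valued function with $n$ non-zero superoptimal singular values (these equal the $t_{j}^{(k)}(\Phi)$, all non-zero by hypothesis), Corollary~\ref{FredholmThmCor} (equivalently Theorem~\ref{FredholmThm}) gives $\dim\ker T_{U}\geq n$, and then $\dim\cE_{W}(\Phi;\ker H_{Q})=\dim B\ker T_{U}=\dim\ker T_{U}\geq n$ since multiplication by $B$ is injective.

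\textbf{Main obstacle.} The routine-looking but genuinely delicate step is the intertwining identity $\|H_{\Phi}(Bg)\|_{2}=\|H_{U}g\|_{2}$ for $g\in H^{2}(\C^{n})$, i.e.\ correctly handling the two-sided multiplication by Blaschke--Potapov products under the Riesz projection $\pP_{-}$: one must verify that $\pP_{-}\Phi Bg$ coincides (after applying the isometry induced by $\Lambda$, resp.\ $\Lambda^{t}$) with $\pP_{-}Ug$, which relies essentially on $H_{\Phi}(Bg)=(\Phi-Q)(Bg)$ (so the full multiplication by $\Phi$ can be replaced by $\Phi-Q$ on this subspace) and on $(\Phi-Q)Bg=H_{\Phi}(Bg)\in\ker H_{Q}^{*}=(\Lambda H^{2}(\C^{n}))^{\perp}\cap H^{2}_{-}(\C^{n})$, so that multiplication by $\overline{\Lambda^{t}}=(\Lambda^{t})^{*}$ lands one back in $H^{2}_{-}(\C^{n})$ without interference from $\pP_{+}$. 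Getting this bookkeeping exactly right --- in particular the transpose on $\Lambda$ versus the adjoint, and which of $\ker H_{Q^{t}}$, $J\ker H_{Q^{t}}$, $\ker H_{Q}^{*}$ is the relevant subspace --- is where the argument must be carried out with care.
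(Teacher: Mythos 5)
Your overall architecture is exactly the paper's: transport the weighted problem on $\ker H_{Q}=BH^{2}(\C^{n})$ to the unweighted Hankel/Toeplitz problem for $U=\Lambda^{t}(\Phi-Q)B$, deduce very bad approximability of $U$ from Theorem \ref{CNTheorem} combined with Theorem \ref{PTmain}, and get the dimension bound from Corollary \ref{FredholmThmCor}. The second and third parts of your argument are correct as written (and you are right that admissibility of $U$ should be checked before invoking Theorem \ref{PTmain}, a point the paper leaves implicit).

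The step you flag as the obstacle is, however, not merely delicate but false in the form you state it: the identity $\|H_{\Phi}(Bg)\|_{2}=\|H_{U}g\|_{2}$ does \emph{not} hold for all $g\in H^{2}(\C^{n})$. Since $\Lambda^{t}\in H^{\infty}$, one has $H_{U}g=\pP_{-}\Lambda^{t}\,\pP_{-}(\Phi-Q)Bg$, so in general only $\|H_{U}g\|_{2}\leq\|H_{\Phi}(Bg)\|_{2}$, with equality precisely when $\pP_{-}(\Phi-Q)Bg$ lies in $J\ker H_{Q^{t}}=J\Lambda H^{2}(\C^{n})$. Similarly, $H_{\Phi}\xi=(\Phi-Q)\xi$ holds only for $\xi\in\cE_{W}(\Phi;\ker H_{Q})$ (observation (4)), not for all $\xi\in\ker H_{Q}$, where one only gets $H_{\Phi}\xi=\pP_{-}(\Phi-Q)\xi$; and your identification $\ker H_{Q}^{*}=(\Lambda H^{2}(\C^{n}))^{\perp}\cap H^{2}_{-}(\C^{n})$ is incorrect (that intersection is all of $H^{2}_{-}(\C^{n})$) --- the relevant space is $J\ker H_{Q^{t}}$. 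The repair is the paper's: prove the two containments separately rather than a global unitary equivalence. For $B\ker T_{U}\subseteq\cE_{W}(\Phi;\ker H_{Q})$, note that $g\in\ker T_{U}$ gives $Ug\in H^{2}_{-}(\C^{n})$, hence $(\Phi-Q)Bg=\overline{\Lambda}\,Ug\in H^{2}_{-}(\C^{n})$ and therefore $\|H_{\Phi}Bg\|_{2}=\|(\Phi-Q)Bg\|_{2}=\|Bg\|_{W}$; equivalently, Pythagoras gives $\cE_{W}(\Phi;\ker H_{Q})=B\ker T_{(\Phi-Q)B}\supseteq B\ker T_{U}$. For the reverse containment, $\xi=Bg\in\cE_{W}(\Phi;\ker H_{Q})$ yields $(\Phi-Q)\xi=H_{\Phi}\xi\in J\ker H_{Q^{t}}$ by observation (4), and $\Lambda^{t}J\Lambda H^{2}(\C^{n})=H^{2}_{-}(\C^{n})$ then gives $T_{U}g=0$. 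With this correction your proof coincides with the paper's.
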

\begin{proof}
			We first establish the equality in $(\ref{mainObser})$ holds.  Recall that if 
			$\xi\in\cE_{W}(\Phi;\ker H_{Q})$, then $(\Phi-Q)\xi=H_{\Phi}\xi\in J \ker H_{Q^{t}}$ and $\xi\in\ker H_{Q}$.  
			It follows that there is an $f\in H^{2}(\C^{n})$ such that $\xi=B f$ and
			\[	U f=\Lambda^{t}(\Phi-Q)B f=\Lambda^{t}H_{\Phi}\xi\in \Lambda^{t}J\ker H_{Q^{t}}=H^{2}_{-}(\C^{n});	\]
			that is, $\cE_{W}(\Phi;\ker H_{Q})\subseteq B\ker T_{U}$.  On the other hand, it is evident that $\ker T_{U}
			\subseteq\ker T_{(\Phi-Q)B}$ and 
			\begin{align*}
						\cE_{W}(\Phi;\ker H_{Q})&=\{\, Bf: f\in H^{2}(\C^{n})\text{ and }\|H_{\Phi}Bf\|_{2}=\|Bf\|_{W}\,\}\\
																		&=B\{\, f\in H^{2}(\C^{n}): \|H_{(\Phi-Q)B}f\|_{2}=\|(\Phi-Q)Bf\|_{2}\,\}\\
																		&=B\ker T_{(\Phi-Q)B}.
			\end{align*}
			Therefore, $B\ker T_{U}\subseteq\cE_{W}(\Phi;\ker H_{Q})$ and thus the proof of $(\ref{mainObser})$ is 
			complete.
				
			Let $\sigma>0$.  In view of the equalities $s_{j}((\Phi-Q)(\zeta))=s_{j}(U(\zeta))$, $j\geq 0$, it is 
			easy to see that $B\fS_{U}^{\sigma}(\zeta)=\fS_{\Phi-Q}^{\sigma}(\zeta)$ for a.e. $\zeta\in\T$.  
			
			By Theorem \ref{CNTheorem}, there are $\xi_{1},\ldots,\xi_{\ell}\in\cE_{W}(\Phi;\ker H_{Q})=B\ker T_{U}$
			such that
			\[	B\fS_{U}^{\sigma}(\zeta)=\fS_{\Phi-Q}^{\sigma}(\zeta)=\mySpan\{\,\xi_{j}(\zeta): 1\leq j\le \ell\,\}
					\text{ for a.e. }\zeta\in\T.	\]
			We deduce now from Theorem \ref{PTmain} that the matrix-valued function $U$ is very badly approximable.
			By Corollary \ref{FredholmThmCor}, we conclude $\dim\cE_{W}(\Phi;\ker H_{Q})\geq n$.
\end{proof}

We now prove our main result.

\begin{thm}\label{MyMainThm}
			Suppose $\Phi$ is $k$-admissible and $Q$ is the superoptimal approximant in $\Hk(\M_{n})$ of $\Phi$.  
			If the number of non-zero	superoptimal singular values of degree $k$ of $\Phi$ equals $n$, then the 
			Toeplitz operator $T_{\Phi-Q}$ is Fredholm and 
			\begin{equation}\label{indexFor}
						\ind T_{\Phi-Q}=\dim\ker T_{\Phi-Q}=2k+\dim\cE_{W}(\Phi;\ker H_{Q}).
			\end{equation}
\end{thm}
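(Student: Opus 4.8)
The plan is to reduce the index computation to the case $k=0$ by means of the auxiliary function $U=\Lambda^{t}(\Phi-Q)B$ introduced in Theorem \ref{mainObserThm}. First I would verify that $U$ is admissible. Since $\Lambda^{t}$ and $B$ are finite Blaschke--Potapov products they are rational and unitary-valued, so $\Psi\mapsto\Lambda^{t}\Psi B$ is an isometric bijection of $(H^{\infty}+C)(\M_{n})$ onto itself; combining this with the formula $\|H_{\Psi}\|_{\rm e}=\dist_{L^{\infty}(\M_{n})}(\Psi,(H^{\infty}+C)(\M_{n}))$ gives $\|H_{U}\|_{\rm e}=\|H_{\Phi-Q}\|_{\rm e}=\|H_{\Phi}\|_{\rm e}$, the last equality by $(\ref{HGfacts})$. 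Because $\Lambda^{t}(\zeta)$ and $B(\zeta)$ are unitary one has $s_{j}(U(\zeta))=s_{j}((\Phi-Q)(\zeta))=t_{j}^{(k)}(\Phi)$ a.e. by $(\ref{sNumbersAndT})$, and $U$ is very badly approximable by Theorem \ref{mainObserThm}; hence $U$ is an admissible very badly approximable $n\times n$ function with $n$ non-zero superoptimal singular values. Thus Theorem \ref{FredholmThm} applies to $U$: the operator $T_{U}$ is Fredholm with $\ind T_{U}=\dim\ker T_{U}$. Since $B$ acts injectively on $H^{2}(\C^{n})$, equation $(\ref{mainObser})$ then yields
\[	\dim\cE_{W}(\Phi;\ker H_{Q})=\dim B\ker T_{U}=\dim\ker T_{U}=\ind T_{U}.	\]

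It remains to prove $\ind T_{\Phi-Q}=\ind T_{U}+2k$, and here I would use the standard behaviour of the Fredholm index of a Toeplitz operator under multiplication of its symbol by co-analytic inner factors. On $\T$ one has $\Phi-Q=(\Lambda^{t})^{*}\,U\,B^{*}$ because $\Lambda^{t}$ and $B$ are unitary-valued. I would record two facts for a finite Blaschke--Potapov product $\Theta\in H^{\infty}(\M_{n})$ of degree $d$: (i) $T_{\Theta^{*}}$ is Fredholm, with $T_{\Theta^{*}}^{*}=T_{\Theta}$ (multiplication by $\Theta$ on $H^{2}(\C^{n})$, which is injective) and $\ker T_{\Theta^{*}}=H^{2}(\C^{n})\ominus\Theta H^{2}(\C^{n})$ of dimension $d$, so $\ind T_{\Theta^{*}}=d$; (ii) for $\Psi\in L^{\infty}(\M_{n})$ with $T_{\Psi}$ Fredholm one has $T_{\Theta^{*}\Psi}=T_{\Theta^{*}}T_{\Psi}$ \emph{exactly}, because $\Theta^{*}$ has only non-positive Fourier coefficients so $\pP_{+}(\Theta^{*}g)=0$ for every $g\in H^{2}_{-}(\C^{n})$, while $T_{\Psi\Theta^{*}}f=T_{\Psi}T_{\Theta^{*}}f+\pP_{+}(\Psi\,H_{\Theta^{*}}f)$ for $f\in H^{2}(\C^{n})$, the error term being compact since $\Theta^{*}\in C(\M_{n})\subset(H^{\infty}+C)(\M_{n})$. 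Applying (ii) with $\Theta=B$ on the right and then $\Theta=\Lambda^{t}$ on the left (each of degree $k$, since transposition preserves both the Blaschke--Potapov structure and the degree) shows that $T_{\Phi-Q}$ differs from $T_{(\Lambda^{t})^{*}}T_{U}T_{B^{*}}$ by a compact operator; by (i) and the additivity of the index on products of Fredholm operators, $T_{\Phi-Q}$ is Fredholm and $\ind T_{\Phi-Q}=\ind T_{(\Lambda^{t})^{*}}+\ind T_{U}+\ind T_{B^{*}}=\ind T_{U}+2k$.

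Combining the two displays gives $\ind T_{\Phi-Q}=\ind T_{U}+2k=2k+\dim\cE_{W}(\Phi;\ker H_{Q})$, while $\ind T_{\Phi-Q}=\dim\ker T_{\Phi-Q}$ is already contained in Corollary \ref{preIndex}; together these are $(\ref{indexFor})$. The step I expect to be the main obstacle is the careful bookkeeping in the first two paragraphs: on one side, checking that $U$ fulfils \emph{every} hypothesis of Theorem \ref{FredholmThm} (in particular the essential-norm identity $\|H_{U}\|_{\rm e}=\|H_{\Phi}\|_{\rm e}$, which is precisely what encodes the admissibility of $U$); on the other, making the inner-factor index computation watertight --- the exactness of $T_{\Theta^{*}\Psi}=T_{\Theta^{*}}T_{\Psi}$, the compactness of the error in $T_{\Psi\Theta^{*}}=T_{\Psi}T_{\Theta^{*}}+(\text{compact})$, and the identification of $\ker T_{\Theta^{*}}$ with the model space of $\Theta$ together with the equality of its dimension and $\deg\Theta$, which rests on the description of finite-codimension invariant subspaces recalled in Section \ref{bestRemarks}.
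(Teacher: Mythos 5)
Your proof is correct and follows essentially the same route as the paper: reduce to the function $U=\Lambda^{t}(\Phi-Q)B$, use Theorem \ref{mainObserThm} together with $(\ref{mainObser})$ to identify $\dim\ker T_{U}$ with $\dim\cE_{W}(\Phi;\ker H_{Q})$, and shift the index by $2k$ coming from the two degree-$k$ Blaschke--Potapov factors. The only difference is one of detail and direction --- the paper gets Fredholmness of $T_{\Phi-Q}$ from Corollary \ref{preIndex} and deduces it for $T_{U}$, whereas you verify admissibility of $U$ directly and propagate Fredholmness the other way, writing out the compact-perturbation argument $T_{\Phi-Q}=T_{(\Lambda^{t})^{*}}T_{U}T_{B^{*}}+(\text{compact})$ that the paper leaves implicit.
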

\begin{proof}
			By Corollary \ref{preIndex}, the Toeplitz operator $T_{\Phi-Q}$ is Fredholm and the first equality in 
			$(\ref{indexFor})$ holds.  Therefore, the matrix-valued function $U=\Lambda^{t}(\Phi-Q)B$ induces a 
			Fredholm Toeplitz operator $T_{U}$ with index
			\begin{equation}\label{indU1}
						\ind T_{U}=\ind T_{\Phi-Q}-2k.
			\end{equation}
			By virtue of Theorem \ref{mainObserThm}, $U$ is very badly approximable and so
			\begin{equation}\label{indU2}
						\ind T_{U}=\dim\ker T_{U}=\dim\cE_{W}(\Phi;\ker H_{Q}).
			\end{equation}
			The conclusion follows immediately from $(\ref{indU1})$ and $(\ref{indU2})$.
\end{proof}

Finally, we state two simple consequences of Theorem \ref{MyMainThm}.

\begin{cor}
			Suppose $\Phi$ is an $n\times n$ matrix-valued function, continuous on $\T$, such that $s_{k}(H_{\Phi})
			<s_{k-1}(H_{\Phi})$.  Let $Q$ be the superoptimal approximant in $\Hk(\M_{n})$ of $\Phi$.  If $\det(\Phi-Q)
			\neq 0$ a.e. on $\T$, then 
			\[	\dim\ker T_{\Phi-Q}\geq 2k+n.	\]
\end{cor}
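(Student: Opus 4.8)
The plan is to reduce the statement to Theorem \ref{MyMainThm} by checking that the continuity hypothesis together with $s_{k}(H_{\Phi})<s_{k-1}(H_{\Phi})$ and $\det(\Phi-Q)\neq 0$ a.e. forces $\Phi$ to be $k$-admissible with exactly $n$ non-zero superoptimal singular values of degree $k$. First I would note that a continuous matrix-valued function on $\T$ belongs to $C(\M_{n})\subseteq (H^{\infty}+C)(\M_{n})$, so $\|H_{\Phi}\|_{\rm e}=0$; since the superoptimal singular values $t_{j}^{(k)}(\Phi)$ are non-negative, the condition $\|H_{\Phi}\|_{\rm e}<\min\{t_{j}^{(k)}(\Phi): t_{j}^{(k)}(\Phi)>0\}$ is automatic (the right-hand side is positive whenever it is well-defined, and if all $t_{j}^{(k)}(\Phi)$ vanish there is nothing to check). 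Combined with the assumed gap $s_{k}(H_{\Phi})<s_{k-1}(H_{\Phi})$, this shows $\Phi$ is $k$-admissible, so by the results recalled in section \ref{bestRemarks} it has a unique superoptimal approximant $Q\in\Hk(\M_{n})$ and $(\ref{sNumbersAndT})$ holds.

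Next I would translate the determinant hypothesis into a statement about the superoptimal singular values. By $(\ref{sNumbersAndT})$, $s_{j}((\Phi-Q)(\zeta))=t_{j}^{(k)}(\Phi)$ for a.e.\ $\zeta\in\T$ and all $j\geq 0$; in particular $|\det(\Phi-Q)(\zeta)|=\prod_{j=0}^{n-1}s_{j}((\Phi-Q)(\zeta))=\prod_{j=0}^{n-1}t_{j}^{(k)}(\Phi)$ for a.e.\ $\zeta\in\T$. Since $\det(\Phi-Q)\neq 0$ a.e.\ on $\T$, this product is non-zero, hence every factor $t_{j}^{(k)}(\Phi)$, $0\leq j\leq n-1$, is non-zero. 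Thus the number of non-zero superoptimal singular values of degree $k$ of $\Phi$ equals $n$, which is precisely the hypothesis required to invoke Theorem \ref{MyMainThm}.

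Finally I would apply Theorem \ref{MyMainThm} with $W=(\Phi-Q)^{*}(\Phi-Q)$ to conclude that $T_{\Phi-Q}$ is Fredholm and
\[	\dim\ker T_{\Phi-Q}=2k+\dim\cE_{W}(\Phi;\ker H_{Q}).	\]
By the last assertion of Theorem \ref{mainObserThm}, $\dim\cE_{W}(\Phi;\ker H_{Q})\geq n$, and therefore $\dim\ker T_{\Phi-Q}\geq 2k+n$, as claimed. I do not anticipate a serious obstacle here: this corollary is essentially a packaging of Theorem \ref{MyMainThm}, and the only mildly delicate point is making sure the $k$-admissibility conditions are all verified from continuity plus the spectral gap — in particular being careful that the ``smallest non-zero superoptimal singular value'' clause in the definition of $k$-admissibility is handled correctly when the determinant hypothesis has not yet been used, which the argument above does by observing that $\|H_{\Phi}\|_{\rm e}=0$ makes the inequality trivially true regardless.
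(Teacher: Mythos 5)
Your proposal is correct and follows essentially the same route as the paper: the determinant hypothesis combined with $(\ref{sNumbersAndT})$ forces all $n$ superoptimal singular values of degree $k$ to be non-zero, and then Theorem \ref{MyMainThm} together with the bound $\dim\cE_{W}(\Phi;\ker H_{Q})\geq n$ from Theorem \ref{mainObserThm} gives the estimate. The paper's proof is just a terser version of yours (it leaves the $k$-admissibility check and the appeal to $\dim\cE_{W}\geq n$ implicit), so there is nothing to add.
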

\begin{proof}
			In view of the equation in $(\ref{sNumbersAndT})$, the assumption $\det(\Phi-Q)\neq 0$ a.e. on $\T$ 
			implies $\Phi$ has $n$ non-zero superoptimal singular values of degree $k$.  The conclusion now follows 
			from Theorem \ref{MyMainThm}.
\end{proof}

\begin{cor}\label{indexCor}
			Suppose $\Phi$ is $k$-admissible and all of the superoptimal singular values of degree $k$ of $\Phi$ 
			are equal.  If $Q$ is a best approximant in $\Hk(\M_{n})$ to $\Phi$, then 
			\begin{equation*}
						\dim\ker T_{\Phi-Q}=2k+\mu,
			\end{equation*}
			where $\mu$ denotes the multiplicity of the singular value $s_{k}(H_{\Phi})$.
\end{cor}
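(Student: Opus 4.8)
The plan is to reduce the statement to results already established: under these hypotheses the given best approximant $Q$ must coincide with \emph{the} superoptimal approximant of $\Phi$ in $\Hk(\M_{n})$, and then Theorem \ref{MyMainThm} and Corollary \ref{schmidtEW} finish the job.

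A preliminary remark first. Since the statement refers to the (finite) multiplicity $\mu$ of $s_{k}(H_{\Phi})$, Lemma \ref{spectralCons} and the discussion following it force $\|H_{\Phi}\|_{\rm e}<s_{k}(H_{\Phi})$; together with the $k$-admissibility of $\Phi$ this yields $(\ref{bestAssumption})$. Moreover each superoptimal singular value of degree $k$ of $\Phi$ equals the common value $t_{0}^{(k)}(\Phi)=s_{k}(H_{\Phi})>0$ (by $(\ref{skTreil})$), so $\Phi$ has exactly $n$ non-zero superoptimal singular values of degree $k$. (In the degenerate case $s_{k}(H_{\Phi})=0$ one would have $\Phi=Q$ a.e.\ and $\mu=\infty$, so both sides of the asserted identity are infinite; the finiteness of $\mu$ rules this out.)

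The reduction is as follows. Since $Q$ is a best approximant, $(\ref{skTreil})$ gives $\|\Phi-Q\|_{L^{\infty}(\M_{n})}=s_{k}(H_{\Phi})$, so for every $j\geq0$
\[	\ess\sup_{\zeta\in\T}s_{j}((\Phi-Q)(\zeta))\leq\|\Phi-Q\|_{L^{\infty}(\M_{n})}=s_{k}(H_{\Phi})=t_{j}^{(k)}(\Phi),	\]
the last equality holding because all superoptimal singular values of degree $k$ coincide. Using the recursive definition of the sets $\Omega_{j}^{(k)}(\Phi)$ in Definition \ref{superoptDef} together with the observation that $t_{j}^{(k)}(\Phi)$ is the least value of $\ess\sup_{\zeta\in\T}s_{j}((\Phi-Q')(\zeta))$ as $Q'$ ranges over $\Omega_{j-1}^{(k)}(\Phi)$, an easy induction on $j$ (base case $j=0$ being the hypothesis that $Q$ is a best approximant) shows $Q\in\Omega_{j}^{(k)}(\Phi)$ for all $j\geq0$, i.e.\ $Q$ is a superoptimal approximant of $\Phi$. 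Since $\Phi$ is $k$-admissible, its superoptimal approximant is unique, hence $Q$ is that approximant; in particular, the best approximant is unique here.

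It remains to assemble the pieces. With $W=(\Phi-Q)^{*}(\Phi-Q)$, Theorem \ref{MyMainThm} shows that $T_{\Phi-Q}$ is Fredholm and $\dim\ker T_{\Phi-Q}=2k+\dim\cE_{W}(\Phi;\ker H_{Q})$; since all superoptimal singular values of degree $k$ of $\Phi$ are equal, Corollary \ref{schmidtEW} gives $\cE_{W}(\Phi;\ker H_{Q})=\Ek(\Phi)$; and by definition $\Ek(\Phi)=\ker\left(H_{\Phi}^{*}H_{\Phi}-s_{k}^{2}(H_{\Phi})I\right)$ has dimension equal to the multiplicity $\mu$ of $s_{k}(H_{\Phi})$. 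Chaining these equalities gives $\dim\ker T_{\Phi-Q}=2k+\mu$. The one step carrying real content is the reduction above — that when the superoptimal singular values all coincide, every best approximant is automatically superoptimal — and I do not foresee a genuine obstacle there; the rest is bookkeeping built on the earlier theorems.
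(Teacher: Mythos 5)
Your proof is correct and follows essentially the same route as the paper: apply Theorem \ref{MyMainThm} together with Corollary \ref{schmidtEW} and identify $\dim\Ek(\Phi)$ with $\mu$. The only difference is that you explicitly verify the step the paper leaves implicit — that when all superoptimal singular values of degree $k$ coincide, the given best approximant must be the (unique) superoptimal approximant, so the two cited results actually apply — and your induction through the sets $\Omega_{j}^{(k)}(\Phi)$ does this correctly.
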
 
\begin{proof}
			The formula is an immediate consequence of Theorem \ref{MyMainThm} and Lemma \ref{schmidtEW}.
\end{proof}

\textbf{Acknowledgment.}  The author would like to thank Professor S.R. Treil for useful comments 
concerning superoptimal approximation by meromorphic functions.

\end{document}